\documentclass[12 pt]{amsart}
\usepackage{amsmath}
\usepackage{amsfonts}
\newtheorem{lemma}{Lemma}
\newtheorem{theorem}{Theorem}

\newtheorem{corollary}{Corollary}

\newtheorem{defn}{Definition}
\numberwithin{equation}{section}
\newcommand{\Ordo}[1]{{O(#1)}}
\newcommand{\ordo}[1]{{o(#1)}}
\newcommand{\R}{\mathbb{R}}

\newcommand{\N}{\mathbb{N}}

\newcommand{\inr}{\mbox{int}}
\newcommand{\spa}{\mbox{span}}

\newcommand{\sppan}{\mbox{span}}

\renewcommand{\dim}{\mbox{dim}_H}
\title[local dimension for non-uniformly expanding maps]{Multifractal analysis of
weak Gibbs measures for non-uniformly expanding $C^1$ maps}
\author{Thomas Jordan}
\address{Thomas Jordan\\Department of Mathematics\\ The University of Bristol\\
University Walk\\Clifton\\ Bristol\\BS8 1TW\\UK}
\email{thomas.jordan@bristol.ac.uk}
\author{Micha\l\ Rams}
\address{Micha\l\ Rams\\Institute of Mathematics\\ Polish Academy of Sciences\\ ul.
\'Sniadeckich 8, 00-956 Warszawa\\ Poland }
\email{M.Rams@impan.gov.pl}
\thanks{We  would like to thank the referee of this paper for their detailed comments which certainly helped improve the paper. The research of M.R. was supported by grants EU FP6 ToK SPADE2, EU FP6 RTN
CODY and MNiSW grant 'Chaos, fraktale i dynamika konforemna'. The
research was started during a visit of M. R. to Bristol. M. R.
would like to thank Bristol University for the hospitality shown
during his visit.}
\begin{document}
\begin{abstract}
We will consider the local dimension spectrum of a weak Gibbs
measure on a $C^1$ non-uniformly hyperbolic system of
Manneville-Pomeau type. We will present the spectrum in three
ways: using invariant measures, ergodic invariant measures
supported on hyperbolic sets and equilibrium states. We are also
proving analyticity of the spectrum under additional assumptions.
All three presentations are well known for smooth uniformly
hyperbolic systems.
\end{abstract}
\maketitle The theory of multifractal analysis for hyperbolic
conformal dynamical systems is now extremely well developed. There
are complete results for local dimension of Gibbs measures,
Lyapunov spectra and Birkhoff spectra. For early results on the
local dimension spectra see \cite{CM} and \cite{R}, for a general
description see \cite{P} and for more specific and very general
results see \cite{BS} and \cite{O}. However, the picture is not
complete for non-uniformly hyperbolic systems. There are results
for specific non-uniformly expanding systems by Kesseb\"{o}hmer
and Stratmann (\cite{KS1} and \cite{KS2}) and in the case of
complex dynamics by Byrne, \cite{B}.

As our interest is in the local dimension spectrum for invariant
measures, five papers are of special interest for us. The first
results about local dimension were in the papers by Nakaishi and
by Pollicott and Weiss, \cite{N},\cite{PW}. They obtained the
local dimension spectrum for the measure of maximal entropy for a
restricted class of parabolic maps without critical points.
Stratmann and Urba\'{n}ski in \cite{SU2} and \cite{SU} considered
complex (hence analytic) maps with possible critical points.
However, the analyticity assumption in these papers could be
probably reduced to assumptions from section 3 of \cite{HMU}. In Theorem 4 of
\cite{Yuri} Yuri computes a portion of the multifractal spectra for weak Gibbs measures of some non-uniformly hyperbolic systems.

The aim of this paper is to obtain a complete spectrum for the
local dimension of weak Gibbs measures for $C^1$ non-uniformly
hyperbolic systems. We will include results about the points where
the local dimension is infinite, a phenomenon which does not occur
in the uniformly hyperbolic setting.   We will be considering
systems with parabolic periodic points but no critical points.
Well known examples of such maps include the Manneville-Pomeau map
and the Farey map.

The methods we use are adapted from the papers \cite{GR} and
\cite{JJOP} where the Lyapunov and Birkhoff spectra of such maps
are considered. For most of the paper we work directly with the
original system, without inducing, which lets us omit the usual
assumptions about behaviour of the map around parabolic points
(except for  Theorem \ref{thm:anal} which deals with analyticity
of the spectra).

\section{Notation and results}
We consider non-uniformly expanding one-dimensional Markov maps.
More precisely, let $I=[0,1]$. Let $\{I_i\}, i=1,\ldots,p$ be
closed subintervals of $I$ with disjoint interiors. Let $A$ be a
$p\times p$ matrix consisting of $0$'s and $1$'s where there
exists $k\in\N$ such that for all $i,j$ $A^{k+1}(i,j)>0$. Let
$T_i: I\to \R$ be $C^1$ bijective diffeomorphisms with closed
domains $J_i\subset I_i$ for which $I_j\subset T_i(J_i)$ if
$A(i,j)=1$ and $T_i(J_i)\cap \inr I_j=\emptyset$ if $A(i,j)=0$. We
will let $\Lambda_0=\cup_i J_i$ and let $T:\Lambda_0\rightarrow I$
be defined as $T_i$ on $J_i$. When $J_i\cap J_j=\{x\}$ and $i<j$
we will take $T(x)=T_i(x)$. We will assume that $|T_i'(x)|\geq 1$
at every point $x\in J_i$ and that there are at most countably
many points with derivative $\pm 1$. We will denote by $\Lambda$
the set of points whose trajectory never leaves $\Lambda_0$.

We will allow the existence of parabolic periodic orbits\\
$\{x,T(x),\ldots,T^{m-1}(x),T^m(x)=x\}$ where the derivative of
$T$ will be $\pm 1$ at all points on the orbit. The existence of a
parabolic orbit implies that the map is non-uniformly expanding.

Let $\Sigma=\{1,\ldots,p\}^\N$ be the full shiftspace with the
usual left shift $\sigma$ and $\Sigma_{A}\subset\Sigma$ the
subshift with respect to the matrix $A$. For
$\underline{i}\in\Sigma$ we will denote by $i_n$ its $n$-th
element and by $i^n$ the sequence of its first $n$ elements. For
some $0<\beta<1$ we will use the metric $d_{\kappa}$ on $\Sigma_A$
given by
$d_{\kappa}(\underline{i},\underline{j})=\kappa^{|\underline{i}\wedge\underline{j}|}$
where $|\underline{i}\wedge\underline{j}|=\inf\{m\in\N:i_m\neq
j_m\}-1$. By our assumptions, $(\Sigma_A,\sigma)$ is topologically
transitive. Let $\Pi:\Sigma_A\rightarrow \Lambda$ be defined by
$$\Pi(\underline{i})=\lim_{n\rightarrow\infty}T_{i_1}^{-1}\circ\cdots
\circ T_{i_n}^{-1}(\Lambda)$$
(the limit is always one point, see the proof of Lemma 2.1 in \cite{U}). The local
dimension of a measure $\mu$ at a point $x$ is defined by
$$d_{\mu}(x)=\lim_{r\rightarrow 0}\frac{\log\mu(B(x,r))}{\log r}$$
when this limit exists.

Let $\phi: \Sigma_A\to \R$ be a continuous potential. We call a $\sigma$-invariant probabilistic measure
$\tilde{\nu}$ supported on $\Sigma_A$ {\em weak Gibbs}
for the potential $\phi$ if there exists a constant $P$ and a
decreasing sequence $\{k_n\}$ such that $\lim_{n\to\infty} k_n=0$
and for every $\underline{i}\in \Sigma_A$ and for every $n$
\[
\exp(-nk_n) \leq \frac {\tilde{\nu}([i_1\ldots i_n])} {\exp(S_n
\phi(\underline{i}) -nP)} \leq \exp(nk_n).
\]
This is similar to the definition in \cite{Yuri00} except we require the result to hold for all sequences not just for $\tilde{\nu}$-almost all sequences.
The existence of such weak Gibbs measures for all continuous potentials is
established in \cite{Kess01}.

For the rest of the paper let $\tilde{\nu}$ be the weak Gibbs
measure for the potential $\phi$ and
$\nu=\tilde{\nu}\circ\Pi^{-1}$.  Our goal
will be to describe the local dimension spectrum function
$\alpha\to\dim X_\alpha$, where
\[
X_{\alpha}=\{x\in \Lambda:d_{\nu}(x)=\alpha\}.
\]
Since $\tilde{\nu}$ is invariant  under the map $\sigma$ it
follows that $\nu$ is invariant  under $T$. We will assume that
$P(\phi)=0$ and $\phi(x)<0$ at each $x\in\Sigma_A$. Note here that
this assumption is not very restrictive in the narrower class
of H\"{o}lder potentials. If $\phi$ is H\"{o}lder continuous and
does not satisfy this assumption then it is possible to add a
constant and a coboundary to obtain a new potential which will
satisfy these conditions (see Theorem 9 in \cite{CLT} and note
that Gibbs measures for H\"{o}lder potentials on subshifts of
finite type have positive entropy).

Let us introduce the following notation, let
$\mathcal{M}_{T}(\Lambda)$ be the set of T-invariant probability
measures on $\Lambda$ and $\mathcal{M}_{\sigma^n}(\Sigma_A)$ be
the set of $\sigma^n$ invariant probability measures on
$\Sigma_A$. For $\mu\in M_{\sigma}(\Sigma_A)$ let
$h(\mu,\sigma^n)$ denote the entropy of $\mu$ with respect to
$\sigma^n$ (recall that by Abramov's Theorem
$nh(\mu,\sigma)=h(\mu,\sigma^n)$ for
$\mu\in\mathcal{M}_{\sigma}(\Sigma_A)$ ). Let
$\psi:\Sigma_A\rightarrow\R$ be defined by
$\psi(\underline{i})=\log |T_{i_1}'(\Pi(\underline{i}))|$
and for $\mu\in M_{\sigma}(\Sigma_A)$ we define the Lyapunov
exponent of the measure $\mu$ as

$$\lambda(\mu)=\int
\psi(\underline{i})\text{d}\mu(\underline{i}).$$ We will call an
ergodic measure $\mu\circ\Pi^{-1}$ {\it hyperbolic} if
$\lambda(\mu)>0$. We will use a more restricted family of measures
as well. An ergodic measure for which the support does not contain
any parabolic periodic orbits will be called {\it uniformly hyperbolic}.
Moreover we will denote the family of uniformly hyperbolic
measures by $\mathcal{M}_H(\Sigma_A)$. Obviously, any uniformly
hyperbolic measure is hyperbolic. Similarly, closed and
$T$-forward invariant subsets of $\Lambda$ that do not contain any
parabolic periodic orbits will be called {\it hyperbolic sets} and the
family of all hyperbolic sets will be denoted by $\mathcal{H}$.
The family of hyperbolic sets on which $T$ is an uniformly
expanding subshift of finite type will be denoted by
$\mathcal{H}_0$. Let
$$\alpha_{\text{min}}=\min_{\mu\in\mathcal{M}_{\sigma}(\Sigma_A)}\left\{-\frac{\int
\phi\text{d}\mu}{\lambda(\mu)}:\lambda(\mu)>0\right\}$$ and
$$\alpha_{\text{max}}=\max_{\mu\in\mathcal{M}_{\sigma}(\Sigma_A)}\left\{-\frac{\int
\phi\text{d}\mu}{\lambda(\mu)}:\lambda(\mu)>0\right\}.$$ Clearly,
$\alpha_{\max}=\infty$ if and only if $\Lambda$ contains a
parabolic periodic orbit (if: consider linear combinations of a measure
with positive Lyapunov exponent and the invariant measure supported on the
parabolic periodic orbit; only if: $-\frac 1 n S_n \phi$ is always bounded
and $\frac 1 n S_n \psi$ is bounded away from zero in the absence of
parabolic points).

We can now state our main results.
\begin{theorem}\label{main}
Let $(X,T)$ be non-uniformly expanding one-dimensional Markov map
and let $\mu$ be a weak Gibbs measure for the potential $\phi$. Then
\begin{enumerate}
\item[]
\item[1.]
$X_\alpha$ is empty for $\alpha<\alpha_{\min}$ or $\alpha
>\alpha_{\max}$.
\item[2.]
  The function $\alpha\to \dim X_\alpha$ is continuous on
$(\alpha_{\min}, \alpha_{\max})$ and right-continuous at
$\alpha_{\min}$. If $\alpha_{\max}<\infty$ then it is also
left-continuous at $\alpha_{\max}$.
\item[3.]
  For any $\alpha\in
[\alpha_{\min},\alpha_{\max}]\setminus \{\infty\}$ we have that
$$\dim
X_{\alpha}=\sup_{\mu\in\mathcal{M}_{\sigma}(\Sigma_A)}\left\{\frac{h(\mu,\sigma)}{\lambda(\mu)}:-\frac{\int
\phi\text{d}\mu}{\lambda(\mu)}=\alpha\text{ and
}\lambda(\mu)>0\right\}.$$
\item[4.]
If $\alpha_{\max}=\infty$ then $\dim
X_\alpha$ is a nondecreasing function of $\alpha$ and
$$\dim X_{\infty}=\dim\Lambda.$$
\end{enumerate}
\end{theorem}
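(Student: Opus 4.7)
\emph{Strategy.} The proof hinges on a symbolic characterisation of the local dimension. Using the weak Gibbs property together with a $C^1$ distortion estimate along Markov cylinders, at a point $x=\Pi(\underline{i})$ the measure of a ball is controlled by $\nu(B(x,r_n))\asymp\exp(S_n\phi(\underline{i}))$ up to a subexponential factor $\exp(\pm nk_n)$, where $r_n$ is of order $\exp(-S_n\psi(\underline{i}))$. Consequently, for finite $\alpha$, the condition $d_{\nu}(x)=\alpha$ is equivalent to $-S_n\phi(\underline{i})/S_n\psi(\underline{i})\to\alpha$, which reduces the analysis of $X_{\alpha}$ to a Birkhoff-type level set of the pair $(\phi,\psi)$ on $\Sigma_A$.

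\emph{Part 1 and upper bound in Part 3.} For $x\in X_{\alpha}$ with $\alpha<\infty$, any weak-$*$ accumulation point $\mu$ of the empirical measures $n^{-1}\sum_{k<n}\delta_{\sigma^k\underline{i}}$ is $\sigma$-invariant and satisfies $-\int\phi\,d\mu/\lambda(\mu)=\alpha$ whenever $\lambda(\mu)>0$, forcing $\alpha\in[\alpha_{\min},\alpha_{\max}]$ and yielding Part 1. For the upper bound in Part 3, I would stratify $X_{\alpha}$ according to the cluster set of these empirical measures: for each ergodic $\mu$ with $\lambda(\mu)>0$ I would cover the corresponding stratum by Markov cylinders selected via the Shannon--McMillan--Breiman theorem, producing a Moran-type cover of Hausdorff dimension at most $h(\mu,\sigma)/\lambda(\mu)$, and then take the supremum over all such $\mu$ satisfying the prescribed ratio constraint.

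\emph{Lower bound in Part 3.} For an ergodic $\mu$ with $\lambda(\mu)>0$ satisfying the ratio condition, I would approximate $\mu$ in the weak-$*$ and entropy topologies by a measure supported on a uniformly hyperbolic set $K\in\mathcal{H}_0$, obtained by cutting off the Markov partition away from the parabolic orbits in the spirit of \cite{GR} and \cite{JJOP}. On $K$ the classical thermodynamic formalism yields a Bernoulli-type measure whose generic points have local dimension $\alpha'$ arbitrarily close to $\alpha$ and whose dimension approaches $h(\mu,\sigma)/\lambda(\mu)$; a diagonal argument then produces a subset of $X_{\alpha}$ realising the target dimension. The non-ergodic case reduces to the ergodic one by weak-$*$ approximating $\mu$ with ergodic measures on enlarged hyperbolic subsystems, while ergodic components with zero Lyapunov exponent live on parabolic orbits and are excluded by the constraint $\lambda(\mu)>0$.

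\emph{Parts 2 and 4, and the main obstacle.} The continuity statement in Part 2 follows by matching the two bounds from Part 3 with standard semicontinuity, noting that the family of hyperbolic-approximating measures with ratio close to $\alpha$ is weak-$*$ rich enough to vary $\alpha$ continuously on $(\alpha_{\min},\alpha_{\max})$ and to control the endpoints. For Part 4, the interpolation $\mu_t=(1-t)\mu+t\mu_{\mathrm{par}}$, with $\mu_{\mathrm{par}}$ the parabolic-orbit measure, satisfies $h(\mu_t,\sigma)/\lambda(\mu_t)=h(\mu,\sigma)/\lambda(\mu)$ while $-\int\phi\,d\mu_t/\lambda(\mu_t)$ sweeps $[\alpha,\infty)$ as $t\uparrow 1$ (using $\phi<0$ and $\psi=0$ at parabolic points), yielding monotonicity; choosing $\mu$ to realise almost $\dim\Lambda$ and letting $t\to 1$ then gives $\dim X_{\infty}=\dim\Lambda$. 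The principal technical obstacle is that the weak Gibbs hypothesis combined with only $C^1$ regularity precludes the usual bounded distortion, so controlling the subexponential errors $k_n$ and the $C^1$ distortion uniformly across the approximating hyperbolic subsystems is where the real work concentrates.
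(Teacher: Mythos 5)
Your symbolic reduction already overstates what is available: the paper obtains only that $d_\nu(x)=\alpha$ forces $\liminf_n(-S_n\phi/S_n\psi)=\alpha$, because the point $x$ can sit arbitrarily close to a cylinder boundary, so $\nu(B(x,r_n))$ can dominate $\tilde\nu([i_1\ldots i_n])$; the quantity $Z_n(\underline i)/D_n(\underline i)$ tracking distance to the boundary (paper's Lemma~\ref{weakavoid}) is what rescues the argument, and the converse direction (Lemma~\ref{symbolic}) only holds under an explicit hypothesis on that ratio. Your upper bound then stratifies $X_\alpha$ by accumulation points of empirical measures and covers each stratum via SMB. Two problems: the cluster set ranges over uncountably many (typically non-ergodic) measures, so a stratum-by-stratum dimension bound does not sum to a bound on the union, and a generic $x\in X_\alpha$ has non-convergent empirical measures, so no single measure's SMB controls it. The paper instead builds a single Moran cover from the sets $Y_{n,\alpha,\epsilon}$, defines one critical exponent $s_n$, and shows these $s_n$ are asymptotically realised by the engineered $\sigma^{n+k}$-invariant Bernoulli measures $\mu_q\in M_n(\Sigma_A)$; this sidesteps both difficulties.

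Two further gaps are more serious. First, the lower bound. For $\alpha\in(\alpha_{\min},\alpha_{\max})$ a single hyperbolic subsystem can only give you a set whose local dimension is some $\alpha'\neq\alpha$, and ``a diagonal argument'' is not a construction: you need a measure whose typical points have Birkhoff ratio exactly $\alpha$ and whose dimension realises $\liminf h(\mu_i)/\lambda(\mu_i)$, while also satisfying the boundary-avoidance condition of Lemma~\ref{symbolic}. The paper does this by the non-invariant w-measure of \cite{GR}, concatenating arbitrarily long blocks typical for Gibbs measures $\mu_i$ with $-\int\phi\,d\mu_i/\lambda(\mu_i)\to\alpha$ and $\dim\mu_i\to\sup$, and then verifying a Borel--Cantelli estimate for $Z_k/D_k$ along the concatenation. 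Second, and most importantly, your proof that $\dim X_\infty=\dim\Lambda$ breaks in exactly the regime the theorem is stated for. You pick $\mu$ with $\dim\mu$ almost $\dim\Lambda$ and interpolate with the parabolic measure; but under only $C^1$ regularity it is not known that hyperbolic ergodic measures of dimension arbitrarily close to $\dim\Lambda$ exist, so the supremum $\eta$ you would attain may satisfy $\eta<\dim\Lambda$. The paper flags this explicitly and closes the gap by a direct geometric estimate: the set of points with non-zero upper local Lyapunov exponent has dimension $\le\eta$ (via the same cover machinery), and for points with zero exponent but bounded local dimension the auxiliary sets $L_\beta$ have Hausdorff dimension $0$. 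Without that step your argument only proves $\dim X_\infty\ge\eta$, not $=\dim\Lambda$.
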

In addition $\dim X_{\alpha}$ can be described as a supremum over
the dimension of $X_{\alpha}$ restricted to hyperbolic subsets.
\begin{theorem} \label{main3}
Under assumptions of Theorem \ref{main}, for any $\alpha\in
(\alpha_{\min},\alpha_{\max})$ we have that
\[
\dim X_\alpha = \sup_{B\in \mathcal{H}_0} \dim (B\cap
X_\alpha)=\sup_{\mu\in\mathcal{M}_H(\Sigma_A)}\left\{\frac{h(\mu,\sigma)}{\lambda(\mu)}:-\frac{\int
\phi\text{d}\mu}{\lambda(\mu)}=\alpha\right\}.
\]
\end{theorem}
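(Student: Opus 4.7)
The plan is to bootstrap from Theorem \ref{main}, which already expresses $\dim X_\alpha$ as a supremum over all of $\mathcal{M}_\sigma(\Sigma_A)$, and to refine that variational formula by replacing general invariant measures with uniformly hyperbolic ones supported on subshifts of finite type, using a Katok-type horseshoe approximation scheme analogous to the one employed in \cite{GR} and \cite{JJOP} for Lyapunov and Birkhoff spectra. The two inequalities
\[
\sup_{B\in\mathcal{H}_0}\dim(B\cap X_\alpha)\leq\dim X_\alpha,\qquad \sup_{\mu\in\mathcal{M}_H(\Sigma_A)}\frac{h(\mu,\sigma)}{\lambda(\mu)}\leq\dim X_\alpha,
\]
are trivial from the monotonicity of Hausdorff dimension, the inclusion $\mathcal{M}_H(\Sigma_A)\subset\mathcal{M}_\sigma(\Sigma_A)$ and Theorem \ref{main}(3); the work is in the reverse inequalities.

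For the variational reverse inequality, fix $\alpha\in(\alpha_{\min},\alpha_{\max})$ and $\varepsilon>0$. By Theorem \ref{main}(3) choose a $\sigma$-invariant measure $\mu$ with $\lambda(\mu)>0$, $-\int\phi\,d\mu/\lambda(\mu)=\alpha$, and $h(\mu,\sigma)/\lambda(\mu)>\dim X_\alpha-\varepsilon$. Since $h$, $\int\phi\,d\cdot$ and $\lambda$ are affine on $\mathcal{M}_\sigma(\Sigma_A)$, a weighted-average argument on the ergodic decomposition of $\mu$ produces an ergodic component with $h/\lambda\geq\dim X_\alpha-\varepsilon$ and ratio within $\varepsilon$ of $\alpha$; together with the continuity statement of Theorem \ref{main}(2) this allows us to assume $\mu$ itself is ergodic. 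Let $w^{(1)},\ldots,w^{(r)}$ be the codes of the parabolic periodic orbits. For large $N$, the subshift of finite type $\Sigma^{(N)}\subset\Sigma_A$ obtained by forbidding $N$ consecutive occurrences of each $w^{(j)}$ is topologically mixing, $\Pi(\Sigma^{(N)})\in\mathcal{H}_0$, and every ergodic measure supported on $\Sigma^{(N)}$ lies in $\mathcal{M}_H(\Sigma_A)$. Because $\lambda(\mu)>0$ forces $\mu$ to give zero mass to each parabolic orbit, $\mu(\Sigma^{(N)})\to 1$, and a Katok-style horseshoe construction as in \cite{GR,JJOP} produces ergodic measures $\mu_N$ supported on mixing subshifts inside $\Sigma^{(N)}$ with $h(\mu_N,\sigma)\to h(\mu,\sigma)$, $\int\phi\,d\mu_N\to\int\phi\,d\mu$ and $\lambda(\mu_N)\to\lambda(\mu)$. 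A further degree of freedom in that construction (the same one used in \cite{JJOP} to fix a Birkhoff average exactly) lets us arrange $-\int\phi\,d\mu_N/\lambda(\mu_N)=\alpha$ on the nose.

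For the first equality, let $B_N:=\Pi(\mathrm{supp}\,\mu_N)\in\mathcal{H}_0$, on which $T$ is a uniformly expanding subshift of finite type. The weak Gibbs bounds for $\tilde\nu$ with $P=0$, combined with the uniform comparability of balls and cylinders on $B_N$ and the Birkhoff ergodic theorem for $\mu_N$ applied to $\phi$ and $\psi$, give for $\mu_N\circ\Pi^{-1}$-almost every $x$
\[
d_\nu(x)=\lim_{n\to\infty}\frac{S_n\phi(\underline{i})}{-S_n\psi(\underline{i})}=\frac{-\int\phi\,d\mu_N}{\lambda(\mu_N)}=\alpha.
\]
Hence $\mu_N\circ\Pi^{-1}$ lives on $B_N\cap X_\alpha$ and the standard formula $\dim(\mu_N\circ\Pi^{-1})=h(\mu_N,\sigma)/\lambda(\mu_N)$ for uniformly hyperbolic measures (essentially Young's formula) yields $\dim(B_N\cap X_\alpha)\geq h(\mu_N,\sigma)/\lambda(\mu_N)$. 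Passing $N\to\infty$ and $\varepsilon\to 0$ completes both reverse inequalities.

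The main obstacle is the Katok-type horseshoe approximation in the second paragraph: one has to control the entropy, the $\phi$-integral, and the Lyapunov exponent of an ergodic measure whose generic orbits may linger arbitrarily long near parabolic points, and one has to lock the ratio $-\int\phi/\lambda$ to the prescribed value $\alpha$ rather than merely to an approximation. This is precisely the fine approximation mechanism developed in \cite{GR} and \cite{JJOP}; it transfers to the present setting with only cosmetic modifications since the weak Gibbs hypothesis on $\tilde\nu$ requires only that $\phi$ be continuous, which is all that the cited constructions need.
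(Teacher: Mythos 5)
Your reduction from an invariant measure to an ergodic one in the second paragraph is the fatal gap, and the paper itself contains a counterexample to the intermediate claim you need. You assert that, starting from an invariant $\mu$ with $-\int\phi\,d\mu/\lambda(\mu)=\alpha$ and $h(\mu,\sigma)/\lambda(\mu)>\dim X_\alpha-\varepsilon$, the ergodic decomposition produces a component with \emph{both} $h/\lambda\geq\dim X_\alpha-\varepsilon$ \emph{and} ratio within $\varepsilon$ of $\alpha$. Consider the Manneville--Pomeau example from the discussion following Theorem~\ref{main2}: $\phi\equiv-\log 2$, $\nu_0$ the a.c.i.p., and $\alpha>\alpha_0=\log 2/\lambda(\nu_0)$. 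Here $\dim X_\alpha=1$ and the near-optimal invariant measures are of the form $t\delta_0+(1-t)\nu_0$; their ergodic components are $\delta_0$ (with $\lambda=0$, excluded) and $\nu_0$ (with ratio equal to $\alpha_0$, which is bounded away from $\alpha$). No ergodic component satisfies both conditions. The mediant inequality gives you a positive-measure family of components with $h/\lambda$ large, and separately a family with ratio near $\alpha$, but nothing forces these families to intersect; in this example they do not, since $\nu_0$ is the unique ergodic measure of dimension one. The appeal to continuity of $\dim X_\alpha$ cannot repair this, because the component you obtain has ratio $\alpha_0$ at fixed distance from $\alpha$, and a subsequent Katok-type approximation of that single ergodic measure would produce measures with ratio near $\alpha_0$, not $\alpha$. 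The circularity is that the existence of an ergodic (let alone uniformly hyperbolic) measure with ratio $\alpha$ and $h/\lambda$ near $\dim X_\alpha$ is essentially what Theorem~\ref{main3} asserts, so it cannot be used as the starting input.

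The paper sidesteps this entirely by never going through an ``optimal ergodic measure.'' Instead it works with the $\sigma^{n+k}$-invariant Bernoulli-like measures $\mu_q\in M_n(\Sigma_A)$ built in Section 3 from weight functions $q$ on level-$n$ cylinders, whose supports by construction avoid parabolic orbits (Lemma~\ref{muq}(3)). Lemma~\ref{density} and Corollary~\ref{sn} produce, for large $n$, two such measures $\mu_{q_0,n},\mu_{q_1,n}$ with disjointly supported $q_0,q_1$, with ratios on either side of $\alpha$, and with normalized entropy over $\int S_{n+k}\psi$ close to $\dim X_{\alpha\mp\epsilon}$. The convex interpolation $q_t=tq_1+(1-t)q_0$ varies the ratio continuously (Lemma~\ref{muq}(4)), so some $t_0$ hits ratio exactly $\alpha$, while the disjointness of supports gives $h(\mu_{q_t,n},\sigma^{n+k})\geq t\,h(\mu_{q_1,n},\sigma^{n+k})+(1-t)h(\mu_{q_0,n},\sigma^{n+k})$ and Lemma~\ref{muq}(5) controls the denominator. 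One then pushes down to $\sigma$-invariant ergodic measures via Lemma~\ref{approx} as in Corollary~\ref{ergodicdensity}. Your $\Sigma^{(N)}$ restriction and Katok horseshoe heuristic are in the right spirit, but the interpolation mechanism that simultaneously locks the ratio at $\alpha$ and preserves the entropy is the missing ingredient, and it cannot be replaced by decomposing a single invariant measure.
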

Consider the family of potentials $\psi_a=a\psi+b(a)\phi$, where
$b(a)$ is the unique number for which $P(\psi_a)=0$. Like in the
hyperbolic case, the local dimension spectrum can be written as
the Lagrange-Fenchel transform of the function $b$.
\begin{theorem}\label{main2}
Under the assumptions of Theorem \ref{main}, for every
$\alpha\in(\alpha_{\min},\alpha_{\max})$ we have that
\begin{enumerate}
\item $\dim X_{\alpha}=\inf_a\{b(a)\alpha-a\}$.
\item There exists
$a\in \R$ and $\mu_a\in M_\sigma(\Sigma_A)$ such that $\mu_a$ is
an equilibrium state for $\psi_a$, $-\int \phi
d\mu_a/\lambda(\mu_a)=\alpha$ and
\[
\dim X_\alpha  = \frac {h(\mu_\alpha,\sigma)} {\lambda(\mu_\alpha)}.
\]
\item The function $\alpha\rightarrow\dim X_{\alpha}$ is concave
on $[\alpha_{\min},\alpha_{\max}]\setminus \{\infty\}$.
\end{enumerate}
\end{theorem}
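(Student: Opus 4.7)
For part (1), the upper bound is the easy direction. Given any $a\in\R$ and any $\mu\in\mathcal{M}_\sigma(\Sigma_A)$ with $\lambda(\mu)>0$ and $-\int\phi\,d\mu/\lambda(\mu)=\alpha$, the variational principle applied to $\psi_a=a\psi+b(a)\phi$ yields
\[
0 = P(\psi_a) \geq h(\mu,\sigma) + a\lambda(\mu) + b(a)\int\phi\,d\mu,
\]
which, after dividing by $\lambda(\mu)>0$, rearranges to $h(\mu,\sigma)/\lambda(\mu)\leq b(a)\alpha-a$. Taking the supremum over $\mu$ (which equals $\dim X_\alpha$ by Theorem \ref{main}, part 3) and then the infimum over $a$ gives $\dim X_\alpha\leq\inf_a\{b(a)\alpha-a\}$.

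For the matching lower bound and the equilibrium state in part (2), the plan is to approximate by uniformly hyperbolic subsystems and pass to a limit. By Theorem \ref{main3}, for $\alpha\in(\alpha_{\min},\alpha_{\max})$ we have $\dim X_\alpha=\sup_{B\in\mathcal{H}_0}\dim(B\cap X_\alpha)$. On each such $B$ the restricted system is a uniformly expanding subshift of finite type, so classical thermodynamic formalism applies: the pressure is real-analytic and the function $b_B(a)$, defined by $P|_B(\psi_a)=0$, is real-analytic and strictly convex (provided $\psi$ is not cohomologous to a constant on $B$, which holds for $B$ large enough). For $\alpha$ in the interior of the admissible range for $B$, there is a unique $a_B$ with $b_B'(a_B)=1/\alpha$, and its equilibrium state $\mu_{a_B}^B$ satisfies $-\int\phi\,d\mu_{a_B}^B/\lambda(\mu_{a_B}^B)=\alpha$ together with $h(\mu_{a_B}^B,\sigma)/\lambda(\mu_{a_B}^B)=b_B(a_B)\alpha-a_B$. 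Choosing $B_n\in\mathcal{H}_0$ with $\dim(B_n\cap X_\alpha)\to\dim X_\alpha$ and extracting a weak-$*$ convergent subsequence of $\{\mu_{a_{B_n}}^{B_n}\}$, upper semi-continuity of entropy together with continuity of $\psi$ and $\phi$ produce a limit $\mu_a\in\mathcal{M}_\sigma(\Sigma_A)$ that is an equilibrium state for $\psi_a$ realising the correct ratio, hence achieving $h(\mu_a,\sigma)/\lambda(\mu_a)=b(a)\alpha-a=\dim X_\alpha$.

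The main obstacle is controlling this limit. One must verify that no mass escapes to parabolic orbits along the sequence (so that $\lambda(\mu_a)>0$), that the constraint $-\int\phi\,d\mu_{a_{B_n}}^{B_n}/\lambda(\mu_{a_{B_n}}^{B_n})=\alpha$ is preserved in the limit, and that $b_{B_n}(a_{B_n})\to b(a)$ with the derivatives $b_{B_n}'(a_{B_n})=1/\alpha$ held fixed along the sequence. The hypothesis $\alpha<\alpha_{\max}$ forces uniform bounds on the Lyapunov exponent, and monotone approximation of the pressure by pressures on hyperbolic subsets gives the convergence $b_{B_n}\to b$, with convexity of each $b_{B_n}$ providing the needed control on their derivatives.

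Finally, part (3) is immediate from (1): for each fixed $a$ the function $\alpha\mapsto b(a)\alpha-a$ is affine in $\alpha$, so $\alpha\mapsto\inf_a\{b(a)\alpha-a\}$ is concave as an infimum of affine functions.
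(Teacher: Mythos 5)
Your upper bound in part (1) and your argument for part (3) are correct and match the paper: the variational-principle inequality $0 = P(\psi_a) \geq h(\mu,\sigma) + a\lambda(\mu) + b(a)\int\phi\,d\mu$ divided by $\lambda(\mu)$ gives $h(\mu,\sigma)/\lambda(\mu) \leq b(a)\alpha - a$, and concavity follows since a pointwise infimum of affine functions is concave.

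For the lower bound and part (2) you take a genuinely different route. The paper never invokes Theorem \ref{main3} or approximating hyperbolic subsets at this stage; it works directly with equilibrium states of $\psi_a$ on the \emph{full} system. Concretely, the paper's two lemmas show (i) that any equilibrium state $\mu_a$ for $\psi_a$ having $-\int\phi\,d\mu_a/\lambda(\mu_a)=\alpha$ automatically attains the supremum $b(a)\alpha - a$ (this is just the variational principle again, with equality for $\mu_a$ and inequality for competitors), and (ii) that as $a$ ranges over $\R$, the resulting ratios $\alpha_a$ sweep out a dense subset of $(\alpha_{\min},\alpha_{\max})$, using the convexity of $b$ and the estimate $\alpha_a \leq \alpha + 1/b(a)$. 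Your route instead passes to uniformly expanding sub-subshifts $B_n\in\mathcal{H}_0$, solves $b_{B_n}'(a)=1/\alpha$ there, and tries to pass to the limit. This is salvageable in principle, but several steps you flag as needing control are genuinely delicate and not resolved: you need $\alpha$ to lie in the range of $b_{B_n}'$ for large $n$ (this should follow from $\dim(B_n\cap X_\alpha)\to\dim X_\alpha>0$, but requires an argument); you need the $a_{B_n}$ to stay bounded and converge; you need to pass $b_{B_n}(a_{B_n})\to b(a)$ when both the argument and the function vary, for which pointwise monotone convergence of $b_{B_n}\to b$ is not by itself sufficient unless one exploits convexity and local uniform convergence; and you must verify that the limit measure is an equilibrium state for $\psi_a$ on the full system (upper semi-continuity of entropy together with $P(\psi_a)=0$ does handle this, but you should spell it out). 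The payoff of the paper's approach is that these limiting issues never arise, since it produces the desired equilibrium state in one step; the payoff of yours would be tighter integration with Theorem \ref{main3}, but at the cost of a heavier compactness argument that, as written, is only sketched.

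Minor point: in your upper bound you should note that the set of $\mu$ with $\lambda(\mu)>0$ and $-\int\phi\,d\mu/\lambda(\mu)=\alpha$ is nonempty for $\alpha\in(\alpha_{\min},\alpha_{\max})$ (which it is, by the definition of $\alpha_{\min},\alpha_{\max}$ and convex combination), otherwise the supremum in Theorem \ref{main} is vacuous.
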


Let us add some remarks, comparing our results for parabolic maps
with the usual behaviour of expanding maps. Since the space of
invariant measures is closed under the weak* topology and the
entropy is upper semi-continuous, the supremum in Theorem
\ref{main} is always achieved. However, in contrast to the
uniformly hyperbolic situation, this is not always the case with
Theorem \ref{main3}. Indeed, consider a Manneville-Pomeau map for
which an absolutely continuous, invariant, probability measure
$\nu_0$ exists. Let $\phi\equiv -\log 2$. In this situation the
Hausdorff dimension of $X_\alpha$ equals the Hausdorff dimension
of the set of points for which the Lyapunov exponent equals $\log
2/\alpha$ (compare Theorem \ref{main3} with \cite{GR}). If we let
$\alpha>\alpha_0=\log 2/\lambda(\nu_0)$ then we can deduce from
the results in \cite{GR} that $\dim X_\alpha=1$ . However, there
cannot exist any hyperbolic sets of dimension 1 (see  \cite{GR1}
), thus neither of the suprema in Theorem \ref{main3} are achieved.

Moreover, note that the supremum in Theorem \ref{main} is not
necessarily achieved among ergodic measures (i.e. dropping the
uniform hyperbolicity assumption from Theorem \ref{main3}).
Indeed, in the example above $\nu_0$ is the unique ergodic measure
of dimension 1 (\cite{Led}) and hence for $\alpha>\alpha_0$ the
supremum in Theorem \ref{main} cannot be obtained by an ergodic
measure. This is not a contradiction with Theorem \ref{main2} for
the following reason. For all $a\leq -1$, corresponding to
$\alpha\geq\alpha_0$, we have $b(a)=0$ and the Dirac measure at
the parabolic point $\delta_0$ is an ergodic equilibrium state.
For $a=-1$ we have another ergodic equilibrium state, the
absolutely continuous measure $\nu_0$ $\mu_a=\nu_0$, and all the
linear combinations of $\delta_0$ and $\nu_0$ will be
(non-ergodic) equilibrium states too. For all $\alpha>\alpha_0$
the equilibrium state provided by Theorem \ref{main3} is one of
those linear combinations, hence it is invariant but not ergodic.

For all the above results we only need the tempered distortion
property, which easily follows from our assumptions (see Lemma
\ref{distortion}). However, to prove the last result of our paper
we need something more. We refer the reader to the second section
of \cite{U} where these additional assumptions were first
introduced, we only want to mention here that whilst one can
construct examples when they are not satisfied, they hold for all
the most important examples like the Manneville-Pomeau map and the Farey
map.
\begin{theorem} \label{thm:anal}
Assume that $T$ is $C^{1+\theta}$ and our potential $\phi$ is
$C^{\theta}$ for some $\theta>0$ . We also assume that for every
parabolic point $\omega$ of period $n$ $|(T^n)'|$ is monotone on
sufficiently small one-sided neighbourhoods of $\omega$ and that
there exist constants $L>0$ and $0<\beta<\frac \theta {(1-\theta)}$
(or $\beta\in\R_+$ if $T$ is $C^2$) such that
\[
L^{-1}\leq \liminf_{x\to \omega} \frac {||(T^n)'(x)|-1|}
{|x-\omega|^\beta} \leq \limsup_{x\to \omega} \frac
{||(T^n)'(x)|-1|} {|x-\omega|^\beta} \leq L
\]
Then the function $\alpha\to\dim X_\alpha$ is real analytic on
$(\alpha_{\min},\alpha_{\max})$ with exception of at most one
point.
\end{theorem}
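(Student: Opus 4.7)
The plan is to reduce the analyticity question to analyticity of the auxiliary function $a\mapsto b(a)$ on an appropriate open interval. By Theorem \ref{main2}(1), $\dim X_\alpha$ is the Legendre-Fenchel transform of $b$, and the Legendre transform of a strictly convex real-analytic function is real-analytic on the image of its support under the duality. Thus, provided $b$ is strictly convex and real-analytic on $\R$ with the possible exception of one phase-transition value $a_0$, the spectrum $\alpha\mapsto\dim X_\alpha$ is real-analytic on $(\alpha_{\min},\alpha_{\max})$ except possibly at the single image of $a_0$.

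To establish analyticity of $b$, I would induce on a subset $Y\subset\Lambda$ bounded away from the parabolic periodic orbits, chosen so that each component of $Y$ is a finite union of basic cylinders and the first return $\tilde T:=T^\tau$ sends each level set $\{\tau=n\}$ diffeomorphically over $Y$. This produces a countable Markov shift model $(\tilde\Sigma,\tilde\sigma)$ whose symbols are indexed by return branches. The $C^{1+\theta}$ regularity of $T$ combined with the $\beta$-control on $\bigl||(T^n)'|-1\bigr|$ near parabolic points — exactly the additional hypotheses introduced in \cite{U} — yields the classical polynomial tail estimates: the length of the $n$-th return branch decays like $n^{-(1+1/\beta)}$, and $\log|\tilde T'|$ has Hölder distortion with summable variations. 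The constraint $\beta<\theta/(1-\theta)$ (or $\beta\in\R_+$ if $T$ is $C^2$) is precisely what allows these tail estimates to be combined with the $C^\theta$ regularity of $\phi$ so that the induced potentials $\tilde\psi:=S_\tau\psi$ and $\tilde\phi:=S_\tau\phi$ are locally Hölder on $\tilde\Sigma$. Since $P(\phi)=0$, the equation $P_\sigma(a\psi+b\phi)=0$ lifts via Abramov-Kac to $P_{\tilde\sigma}(a\tilde\psi+b\tilde\phi)=0$. Sarig's thermodynamic formalism for countable Markov shifts (alternatively, the Mauldin-Urbanski theory of conformal graph-directed systems) then gives real analyticity of $(a,b)\mapsto P_{\tilde\sigma}(a\tilde\psi+b\tilde\phi)$ on the interior of the domain of finiteness with positive recurrence, together with strict positivity of the partial derivative in $b$. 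The analytic implicit function theorem then yields real analyticity of $b(a)$.

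The main obstacle is the possible phase transition. As $a$ varies, the equilibrium state of $\psi_a$ may cease to be hyperbolic and concentrate on a parabolic orbit; at this critical value $a_0$ the induced pressure loses positive recurrence and the implicit function theorem ceases to apply. One verifies, however, that the set of $(a,b)$ for which $P_{\tilde\sigma}(a\tilde\psi+b\tilde\phi)$ is finite and positively recurrent is an open convex set and that analyticity of $b$ can break at most at one boundary value $a_0$; strict convexity of $b$ on each side of $a_0$ follows because $\tilde\psi$ and $\tilde\phi$ are not cohomologous on $\tilde\Sigma$ (otherwise the spectrum would degenerate to a point, contrary to the existence of an open interval $(\alpha_{\min},\alpha_{\max})$). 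Combining analyticity and strict convexity of $b$ off $a_0$ with the Legendre duality supplied by Theorem \ref{main2} gives real analyticity of $\alpha\mapsto\dim X_\alpha$ on $(\alpha_{\min},\alpha_{\max})$ with at most the single exception corresponding to $a_0$.
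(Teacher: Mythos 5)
Your proposal is correct in outline and uses the same central device as the paper — inducing to a countable Markov system near the parabolic orbits and invoking thermodynamic formalism for that induced system — but it routes the conclusion through a different chain of intermediate results. The paper does not go through the function $b$ at all: it constructs the induced system $\bar T:\bar X\to\bar X$ and induced potentials $\bar\phi,\bar\psi$ exactly as in Hanus--Mauldin--Urba\'nski (\cite{HMU}), defines the corresponding level sets $\bar X_\alpha$, and then proves the bridging identity $\dim X_\alpha=\dim\bar X_\alpha$ via the sandwich $X_\alpha\subset\bar X_\alpha\subset\bigcup_{\beta\le\alpha}\Pi Y_\beta$, the cover estimate \eqref{ya}, and monotonicity of $\dim X_\alpha$. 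After that it applies Theorem~7.4 of \cite{HMU} as a black box, which already gives analyticity of $\alpha\mapsto\dim\bar X_\alpha$ away from at most one point (the possible SRB value). Your route instead uses Theorem~\ref{main2}(1) ($\dim X_\alpha=\inf_a\{b(a)\alpha-a\}$), lifts the implicit equation $P(a\psi+b\phi)=0$ to the induced pressure via Abramov--Kac, gets analyticity of $b$ off a single phase-transition value $a_0$ from Sarig/Mauldin--Urba\'nski theory plus the implicit function theorem, and then transfers analyticity through the Legendre--Fenchel transform. Each approach has something to recommend it: the paper's is shorter and avoids any discussion of $b$, strict convexity, or the Legendre transform, but it must separately justify $\dim X_\alpha=\dim\bar X_\alpha$ (the induced set misses orbits that do not return infinitely often); yours makes the mechanism inside HMU's theorem visible and ties the exceptional point explicitly to the loss of positive recurrence, which matches the remark after the theorem about the SRB measure, but it inherits the logical dependency on Theorem~\ref{main2}. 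Two small points of care in your write-up: the relation $P_\sigma(\psi_a)=0\Leftrightarrow P_{\tilde\sigma}(a\tilde\psi+b\tilde\phi)=0$ via Abramov--Kac is only valid while the equilibrium state charges the inducing base (exactly the positive-recurrence regime, where you apply it), and $b$ is \emph{not} strictly convex globally — it vanishes identically on the ray $a\le s$ in the presence of parabolic points — so strict convexity must be asserted only on the side of $a_0$ where the recurrent equilibrium state exists, which is what you implicitly do but should be stated plainly.
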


More precisely, this function is real analytic if and only if no
SRB measure exists, otherwise it is constant (equal to $\dim
\Lambda$) for sufficiently big $\alpha$. We comment a bit more on
this at the end of section 7.

Let us remark here that for the measure of maximal entropy its
level set for local dimension $\alpha$ coincides with the level
set for Lyapunov exponent $\frac h \alpha$. Hence, the piecewise
analyticity of the local dimension spectrum for the measure of
maximal entropy implies piecewise analyticity of the Lyapunov
spectrum. By applying the results from \cite{GR} this implies the
piecewise analyticity of $P(t\log |T'|)$ (the only place where the
analyticity will fail is at the solution to $P(t\log |T'|)=0$.)

The paper is structured as follows. In section 2 we prove some
preparatory lemmas relating the local dimension to the symbolic
dynamics. Section 3 deals with the question of relating upper
bounds for $X_{\alpha}$ to invariant measures which is a key
ingredient for the upper bound in part 3 of Theorem \ref{main} and the upper bound in Theorem \ref{main3}. Section 4
contains the proof for Theorem \ref{main}. Theorems
\ref{main3},\ref{main2},\ref{thm:anal} are then proved in sections
5,6 and 7 respectively.

\section{Symbolic dynamics and local dimension}\label{section2}

A key technique in the proof of Theorem \ref{main} will be to
relate cylinder sets in the shift space to sets in $I$. We will
denote
$$\Delta_{i_1,\ldots,i_n}=\sppan\Pi([i_1,\ldots,i_n]).$$
We will also use the notation
$\Delta_n(\underline{i})=\Delta_{i_1\ldots i_n}$. We will define
$D_n:\Sigma_A\rightarrow\R$ by
$$D_n(\underline{i})=\text{diam}(\Delta_n(\underline{i})).$$
The tempered distortion property guarantees that in most cases
$\frac 1 n \log D_n(\underline{i})$ is close to $\frac 1 n
S_n\psi(\underline{i})$.

\begin{defn} \label{distortionlem}
We say that the {\it tempered distortion} holds for a potential $f:\Sigma_A\rightarrow\R$ if there
exists a decreasing sequence $K_n(f)$ such that
$\lim_{n\rightarrow\infty}K_n(f)=0$ and for any
$\underline{i}\in\Sigma_A$ we have
$$\sup\{S_n f(\underline{i})-S_n f(\underline{j}):\underline{i},\underline{j}\in
[i_1,\ldots,i_n]\}\leq nK_n(f).$$
\end{defn}
It is clear that by compactedness any continuous potential on $\Sigma_A$ will satisfy this property.
The following important property (which allowed us, among other
things, to put a symbolic coding on $\Lambda$, as defined in the
previous section) is the statement of Lemma 2.1 in \cite{U}.

\[
\lim_{n\to\infty} \max_{\underline{i}\in \Sigma_A}
D_n(\underline{i})=0.
\]
This Lemma has some additional assumptions however the proof of the Lemma clearly still works in the current setting.
The following lemma is an immediate consequence.

\begin{lemma}\label{distortion}
For any sequence $\underline{i}\in\Sigma_A$ the
distortion of the map $T^n$ restricted to
$\Delta_n(\underline{i})$ is bounded by $e^{nK_n(\psi)}$,
independently of $\underline{i}$. In particular,
$$e^{-nK_n(\psi)}\leq D_n(\underline{i}) e^{S_n\psi(\underline{i})}\leq e^{nK_n(\psi)}.$$
\end{lemma}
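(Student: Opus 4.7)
My plan is to deduce both claims from the chain-rule expression of $\log|(T^n)'|$ on $\Delta_n(\underline{i})$, combined with the uniform shrinking of cylinder diameters provided by Lemma~2.1 of \cite{U}.

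Each $T_{i_k}$ is a monotone $C^1$ bijection on $J_{i_k}$, and $\Delta_n(\underline{i})\subset J_{i_1}$. Since continuous monotone maps preserve spans, an induction yields
\[
T^k(\Delta_n(\underline{i})) = \Delta_{n-k}(\sigma^k\underline{i}) \subset J_{i_{k+1}} \qquad (0\le k\le n-1),
\]
so $T^n|_{\Delta_n(\underline{i})}$ coincides with the $C^1$ composition $T_{i_n}\circ\cdots\circ T_{i_1}$ and the chain rule gives $\log|(T^n)'(x)|=\sum_{k=0}^{n-1}\log|T'(T^k x)|$ for $x\in\Delta_n(\underline{i})$. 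For two points $x,y\in\Delta_n(\underline{i})$, the iterates $T^k x, T^k y$ both lie in the interval $\Delta_{n-k}(\sigma^k\underline{i})$, whose diameter is at most $D^*_{n-k}:=\max_{\underline{j}}D_{n-k}(\underline{j})$ and tends to $0$ by the cited lemma. Since $\log|T'|$ is (uniformly) continuous on the compact set $\bigcup_i J_i$ with some modulus $\omega$, termwise subtraction gives
\[
\bigl|\log|(T^n)'(x)|-\log|(T^n)'(y)|\bigr| \le \sum_{m=1}^{n}\omega(D^*_m) = nK_n(\psi),
\]
where $K_n(\psi):=\tfrac1n\sum_{m=1}^n\omega(D^*_m)$ is decreasing (because $D^*_m$ is) and tends to $0$ by Ces\`{a}ro. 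This is the distortion bound; evaluated at $x=\Pi(\underline{i})$ the chain-rule identity reads $\log|(T^n)'(\Pi(\underline{i}))|=S_n\psi(\underline{i})$, so $K_n(\psi)$ simultaneously witnesses the tempered distortion of $\psi$.

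For the ``in particular'' estimate I apply the mean value theorem to the monotone $C^1$ map $T^n$ on $\Delta_n(\underline{i})$: some $\xi\in\Delta_n(\underline{i})$ satisfies $|T^n(\Delta_n(\underline{i}))|=D_n(\underline{i})|(T^n)'(\xi)|$. The Markov condition $I_j\subset T_{i_n}(J_{i_n})$ whenever $A(i_n,j)=1$ (such $j$ exists because the aperiodicity of $A$ forbids zero rows) forces $T^n(\Delta_n(\underline{i}))\supset\Pi([j])$, so $c\le|T^n(\Delta_n(\underline{i}))|\le 1$ with $c:=\min_j\diam\Pi([j])>0$. Together with the distortion bound $|(T^n)'(\xi)|=e^{S_n\psi(\underline{i})\pm nK_n(\psi)}$, multiplying gives
\[
D_n(\underline{i})\,e^{S_n\psi(\underline{i})}\in\bigl[c\,e^{-nK_n(\psi)},\,e^{nK_n(\psi)}\bigr],
\]
and absorbing the constant $-\log c$ into $K_n(\psi)$ (still decreasing to $0$) yields the stated symmetric bound.

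The main obstacle is the passage from the symbolic tempered distortion to a geometric distortion bound on the whole interval $\Delta_n(\underline{i})$: the shrinking-diameter input from Lemma~2.1 of \cite{U} is the essential dynamical ingredient, controlling the rate at which the modulus of continuity of $\log|T'|$ absorbs the errors accumulated along the chain rule.
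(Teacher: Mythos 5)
Your proof is correct and makes precise what the paper leaves implicit: the lemma is stated without proof as an ``immediate consequence'' of the shrinking-diameter result from \cite{U}, and the two ingredients you supply --- uniform continuity of $\log|T'|$ on the compact set $\bigcup_i J_i$, and the uniform decay $\max_{\underline{j}}D_m(\underline{j})\to 0$ feeding into its modulus of continuity --- are exactly what that phrase is pointing to. Your explicit choice $K_n(\psi)=\tfrac1n\sum_{m=1}^n\omega(D_m^*)$ is a valid witness for Definition~\ref{distortionlem} (decreasing, Ces\`aro-null, and dominating the geometric distortion of $T^n$ on all of $\Delta_n(\underline{i})$, not merely on $\Pi([i_1\ldots i_n])$), and the mean-value-theorem step for the ``in particular'' bound, together with $T^n(\Delta_n(\underline{i}))\supset\Pi([j])$ for some admissible $j$ and the harmless absorption of $-\log c$ into $K_n(\psi)$, is sound.
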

We will denote
\[
\rho_n=\max(k_n, K_n(\phi), K_n(\psi)).
\]

We now consider how close $x$ can be to the endpoints of
cylinders. For a sequence $\underline{i}\in\Sigma_A$ we will
define by $Z_n(\underline{i})$ the distance between
$\Pi(\underline{i})$ and the endpoints of the $n$th level cylinder
it lies in, i.e.
$$Z_n(\underline{i})=d(\Pi(\underline{i}),\partial (\Delta_n(\underline{i}))).$$
Let
$$E=\{\underline{i}\in\Sigma:\exists N\in\N\text{ such that for }n\geq N\text{
}Z_n(\underline{i})=0\}$$ and note that this is a countable set
which contains all the points where the map $\Pi$ is not
bijective.

\begin{lemma}\label{weakavoid}
There exists $K>0$ such that for any
$\underline{i}\in\Sigma_A\backslash E$ for infinitely many $n\in
N$
$$\frac {Z_n(\underline{i})} {D_n(\underline{i})} e^{n\rho_n} \geq K.$$
\end{lemma}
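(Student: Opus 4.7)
The plan is to apply Lemma \ref{distortion} to transfer the problem to the image side, where the relevant geometry involves only finitely many intervals. Writing $J^{(i)} := T^n(\Delta_n(\underline{i})) = \sppan\bigcup_{j:A(i,j)=1}\Pi([j])$---an interval depending only on $i_n=i$ and therefore drawn from a finite list of intervals of positive length---tempered distortion and monotonicity of $T^n$ on $\Delta_n(\underline{i})$ give
\[
\frac{Z_n(\underline{i})}{D_n(\underline{i})} \;\geq\; e^{-n\rho_n}\,\frac{\mathrm{dist}\bigl(\Pi(\sigma^n\underline{i}),\,\partial J^{(i_n)}\bigr)}{|J^{(i_n)}|}.
\]
Hence it suffices to find a uniform $K>0$ such that the image-side ratio is at least $K$ at infinitely many $n$.

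I would call $n$ a \emph{splitting} level if $i_n$ has at least two valid successors; splitting levels occur infinitely often, for otherwise $D_n$ would eventually be a positive constant, contradicting $D_n\to 0$ from Lemma~2.1 of \cite{U}. At each splitting level, classify $\Delta_{n+1}(\underline{i})$ as interior, leftmost, or rightmost sub-cylinder of $\Delta_n(\underline{i})$. If the interior case occurs infinitely often, then at each such $n$ the point $\Pi(\sigma^n\underline{i})\in\sppan\Pi([i_{n+1}])$ is flanked inside $J^{(i_n)}$ by image sub-pieces $\sppan\Pi([j])$ on both sides, yielding the required uniform separation (since these sub-piece diameters are drawn from a finite list).

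If instead every splitting from some $N$ on is of edge type, and only one edge type ever occurs (say all left-edge), then $L_n$ is constant for $n\geq N$; together with $D_n\to 0$ this forces $\Pi(\underline{i})=L_N\in\partial\Delta_n$ for all $n\geq N$, contradicting $\underline{i}\notin E$. The remaining, subtlest case is alternation: both edge types occur infinitely often. At a transition from left-edge at splitting $n$ to right-edge at the next splitting $n'$ (with $\Delta_{n'}(\underline{i})=\Delta_{n+1}(\underline{i})$ since no splittings intervene), the point $\Pi(\underline{i})$ lies in the rightmost sub-cylinder of $\Delta_{n+1}(\underline{i})$, which is itself the leftmost sub-cylinder of $\Delta_n(\underline{i})$, providing simultaneous image-side separations from both endpoints of $J^{(i_n)}$ through the finite-alphabet geometry.

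\textbf{Main obstacle.} The crux is the alternating case, where one must ensure that the image-side separation obtained from a switch is not degraded by the extra distortion $e^{(n'-n)\rho_{n'-n}}$ of iterating $T^{n'-n}$ across the intermediate non-splitting levels. This is the central quantitative point: one shows that for infinitely many switches the gap $n'-n$ produces a factor that is still absorbed by $e^{-n\rho_n}$ in the image-side reduction, so that the image-side separation dominates correctly and yields a uniform $K$ independent of $\underline{i}$.
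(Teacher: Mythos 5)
Your proposal is in the right spirit---transfer to the image side by distortion, then find infinitely many levels with a uniform image-side separation---but it takes a considerably more convoluted route than the paper, and the route contains a genuine gap that your own "main obstacle" paragraph acknowledges without resolving. The paper's argument is shorter and cleaner: it fixes the leftmost and rightmost \emph{second-level} words $j_1 j_2$ and $j_3 j_4$ once and for all, observes that if $\underline{i}\notin E$ then there are infinitely many $n$ for which $\sigma^n\underline{i}$ begins with neither, and at such $n$ the image-side distance from $\Pi(\sigma^n\underline{i})$ to $\partial T^n\Delta_n(\underline{i})$ is bounded below by $\min(\mathrm{diam}\,\Delta_{j_1j_2},\mathrm{diam}\,\Delta_{j_3j_4})$, a fixed constant. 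A single application of Lemma~\ref{distortion} then yields the factor $e^{n\rho_n}$ and the lemma is done. No splitting levels, no left/right/interior classification, no alternation analysis.

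The concrete gap in your argument is the alternating case. You claim that a left-to-right switch between consecutive splitting levels $n$ and $n'$ gives "simultaneous image-side separations from both endpoints of $J^{(i_n)}$", and you identify that the cost of this claim is the distortion $e^{(n'-n)\rho_{n'-n}}$ of $T^{n'-n}$ on $L:=T^n(\Delta_{n+1})$. But you then assert, without proof, that this factor is "absorbed by $e^{-n\rho_n}$ in the image-side reduction." It is not: the factor $e^{n\rho_n}$ appearing in the statement of the lemma is the distortion of $T^n$ on $\Delta_n$, which is what transfers the image-side ratio at level $n$ back to $Z_n/D_n$; it has nothing to do with, and cannot compensate for, the separate distortion of $T^{n'-n}$ on $L$. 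Since $n\rho_n$ need not be bounded, $e^{(n'-n)\rho_{n'-n}}$ can be arbitrarily large when $n'-n$ is large, and then the sub-interval $(T^{n'-n}|_L)^{-1}(\text{rightmost cylinder of }J^{(i_{n'})})$ can occupy almost all of $L$, destroying the lower bound on the distance to the nearer endpoint of $J^{(i_n)}$. Your sketch offers no mechanism for choosing infinitely many switches with $n'-n$ controlled, and indeed a trajectory that spends longer and longer stretches near a parabolic orbit between splittings (which is typical in this setting) makes exactly this problem occur. Two smaller points: "leftmost/rightmost sub-cylinder of $\Delta_n$" and "leftmost/rightmost first-level cylinder of $J^{(i_n)}$" need not correspond, since $T^n$ may reverse orientation; and the step "$L_n$ constant $\Rightarrow \Pi(\underline{i})\in\partial\Delta_n$" silently assumes there are no gaps between adjacent sub-cylinders. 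Both are repairable, but both disappear entirely if one uses the paper's two-symbol-word argument instead.
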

\begin{proof}
Let $j_1 j_2$ and $j_3 j_4$ be the symbolic expansions of the
leftmost and rightmost second level cylinders in $I$. We note that
for $\underline{i}\notin E$ there will be infinitely many $n$ for
which $\sigma^n\underline{i}$ will start with a sequence other
than $j_1 j_2$ or $j_3 j_4$, that is,

\[
\Pi(\underline{i}) \in \Delta_{i_1,\ldots,i_n} \setminus
(\Delta_{i_1,\ldots,i_n j_1 j_2} \cup \Delta_{i_1,\ldots,i_n j_3
j_4})
\]
The triple $(\Delta_{i_1,\ldots,i_n}, \Delta_{i_1,\ldots,i_n j_1
j_2}, \Delta_{i_1,\ldots,i_n j_3 j_4})$ gets mapped onto
$(\Delta_\emptyset, \Delta_{j_1 j_2}, \Delta_{j_3 j_4})$ by the
map $T^n$. The result then follows from  Lemma \ref{distortion}.
\end{proof}

We now show that for typical points with respect to an ergodic
measure with positive entropy we have an even stronger result.
\begin{lemma}\label{strongavoid}
Let $\mu\in\mathcal{M}_{\sigma}(\Sigma_A)$ be ergodic and satisfy
$h(\mu,\sigma)>0$. We have that for $\mu$ almost all
$\underline{i}$
$$\lim_{n\to \infty} \frac{1}{n} \log
\left(\frac{Z_n(\underline{i})}{D_n(\underline{i})}\right)=0.$$
\end{lemma}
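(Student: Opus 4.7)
The upper bound $\limsup_n n^{-1}\log(Z_n/D_n)\le 0$ is immediate from $Z_n\le D_n$, so only the matching lower bound needs proof. My plan is to iterate the mechanism of Lemma \ref{weakavoid} at scale $M$ and apply a Borel--Cantelli argument.

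First, I would extend Lemma \ref{weakavoid} from second-level to $M$-th level extreme cylinders. For each symbol $a$, let $L^M_a,R^M_a\subset I^*_a:=T\Delta_1(a)$ denote the leftmost and rightmost $M$-th level subcylinders. The triple-cylinder argument from Lemma \ref{weakavoid}, applied at this scale together with the bounded distortion of Lemma \ref{distortion}, gives: whenever $\sigma^n\underline{i}\notin L^M_{i_n}\cup R^M_{i_n}$,
\[
Z_n(\underline{i})/D_n(\underline{i})\ \ge\ K_M\,e^{-n\rho_n},\qquad K_M:=\min_a\min\bigl(|L^M_a|,|R^M_a|\bigr)/|I^*_a|.
\]

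Next, I would let $M=M(n)$ grow polynomially and apply Borel--Cantelli. By $\sigma$-invariance of $\mu$ the "bad" event probability equals the measure of the corresponding $(M{+}1)$-cylinder:
\[
\mu\bigl\{\sigma^n\underline{i}\in L^{M(n)}_{i_n}\cup R^{M(n)}_{i_n}\bigr\}\ \le\ 2p\,\gamma_{M(n)},
\]
where $\gamma_M$ is the largest measure of an extreme $(M{+}1)$-cylinder $[a\ell_1(a)\ldots\ell_M(a)]$ or $[ar_1(a)\ldots r_M(a)]$ over $a=1,\ldots,p$. If $\gamma_M$ decays polynomially in $M$ and $M(n)$ is tuned so that both $\sum_n\gamma_{M(n)}<\infty$ and $\log K_{M(n)}=o(n)$, then Borel--Cantelli gives $\sigma^n\underline{i}\notin L^{M(n)}_{i_n}\cup R^{M(n)}_{i_n}$ for all large $n$ $\mu$-a.s., whence $Z_n/D_n\ge K_{M(n)}e^{-n\rho_n}=e^{-o(n)}$, as required.

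The main obstacle is establishing polynomial decay of $\gamma_M$. For a uniformly expanding extreme periodic orbit this follows from standard arguments using positive entropy and Shannon--McMillan--Breiman applied to the specific cylinder around the orbit. For a \emph{parabolic} extreme periodic orbit $\omega$ the argument is subtler: since $\delta_{\omega^\infty}$ has zero entropy, the hypothesis $h(\mu)>0$ combined with ergodicity forces $\mu(\{\omega^\infty\})=0$ and hence $\gamma_M\to 0$. Upgrading this qualitative decay to a quantitative polynomial rate requires combining it with the Manneville--Pomeau-type local form of $T^{\mathrm{per}(\omega)}$ near $\omega$ and the $\sigma$-invariance of $\mu$, which translates the slow geometric shrinkage of $[\omega^M]$ into a rate of decay for $\gamma_M$. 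This parabolic estimate is the technical heart of the proof; once it is in hand, the Borel--Cantelli bookkeeping above completes the argument.
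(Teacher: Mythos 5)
Your route is genuinely different from the paper's, and it contains a gap that is not merely a matter of unfinished bookkeeping. You correctly reduce the problem to controlling how often $\sigma^n\underline{i}$ lands in an extreme $M$-cylinder and propose to close this by Borel--Cantelli, which requires a summable bound on $\gamma_{M(n)}$ together with $\log K_{M(n)}=o(n)$. You explicitly leave the quantitative decay of $\gamma_M$ unproved, and in the generality of the lemma no such polynomial (or otherwise pre-specified) rate is available. The hypothesis is only that $\mu$ is ergodic with $h(\mu,\sigma)>0$ on a $C^1$ system; positive entropy gives $\mu(\{\omega^\infty\})=0$ and therefore only the \emph{qualitative} statement $\gamma_M\to 0$, with no rate. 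Nothing in the standing assumptions of Theorem \ref{main} (as opposed to the extra hypotheses imposed solely for Theorem \ref{thm:anal}) gives you the Manneville--Pomeau local form you invoke; and even granting such a local form, the decay rate of $\gamma_M$ depends on the measure $\mu$, not just on the geometry of $T$, since the extreme cylinders around a parabolic orbit need not be $\mu$-typical and SMB gives no uniform control there. So what you flag as the ``technical heart'' is a genuine obstruction to this approach at the level of generality required, not an estimate waiting to be filled in.

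The paper's proof avoids exactly this by replacing the quantitative Borel--Cantelli step with a soft Birkhoff/frequency argument. It sets $k_n(\underline{i})$ to be the length of the run of the symbol $i_n$ starting at position $n+1$, applies the Birkhoff Ergodic Theorem to the $\R^p$-valued observable $f(\underline{i})=e_{i_1}$, and exploits the identity $S_{n+k_n}f(\underline{i})=S_nf(\underline{i})+k_n e_{i_n}$. Because $h(\mu,\sigma)>0$ forces $\int f\,d\mu\in[0,1)^p$ (no symbol has asymptotic frequency $1$), the mere existence of the Birkhoff limit forces $k_n(\underline{i})=o(n)$ for $\mu$-a.e.\ $\underline{i}$; combining this with tempered distortion and the bound $\max_{\underline i}(-\log D_n(\underline{i}))=O(n)$ yields the lemma. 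The point is that this argument needs only the qualitative fact that the frequency vector lies in the open cube, so it requires no quantitative control whatsoever on the $\mu$-measure of extreme cylinders -- precisely the quantity your approach cannot bound.
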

\begin{proof}
Fix such an ergodic measure $\mu$. For $\underline{i}\in\Sigma_A$
let
$$k_n(\underline{i})=\max_{k}\{i_{n+1}=i_n,\ldots,i_{n+k}=i_n\}.$$
Define $f:\Sigma_A\rightarrow\R^p$ by $f(\underline{i})=e_{i_1}$
(the $i_1$th unit vector in $\R^p$) and note that by the Birkhoff
Ergodic Theorem for $\mu$ almost all $\underline{i}$
\begin{equation} \label{eqn:erg}
\lim_{n\rightarrow\infty}\frac{S_nf(\underline{i})}{n}=\int
f\text{d}\mu
\end{equation}
and since $h(\mu)>0$ we have $\int f\text{d}\mu\in [0,1)^p$.
However, 
$$S_{n+k_n(\underline{i})}f(\underline{i})=S_nf(\underline{i})+k_n(\underline{i})e_{i_n}.$$
Thus, for the limit in \eqref{eqn:erg} to exist, we must have
$k_n(\underline{i})=\ordo{n}$. The result now follows by tempered
distortion and the fact that (because of boundedness of $|T'|$)
$$\max_{\underline{i}\in\Sigma_A}-\log D_n(\underline{i})=\Ordo{n}.$$
\end{proof}

\section{Covers of $X_{\alpha}$ and invariant measures}

In this section we relate covers of the sets $X_{\alpha}$ to
certain invariant measures. This will allow us to show that
$$\dim X_{\alpha}\leq\sup_{\mu\in\mathcal{M}(\Sigma_A)}\left\{\frac{h(\mu,\sigma)}{\lambda(\mu)}:-\frac{\int
\phi\text{d}\mu}{\lambda(\mu)}=\alpha\right\}$$
which gives the upper bound for part 3 of Theorem \ref{main}. These results will also be needed to prove Theorem \ref{main3}.
We start by giving a simple result
relating $\sigma^n$ invariant measures to $\sigma$-invariant
measures.
\begin{lemma}\label{approx}
Let $\mu\in\mathcal{M}_{\sigma^n}(\Sigma_A)$. We can find an
invariant measure $m\in\mathcal{M}_{\sigma}(\Sigma_A)$ such that
\begin{enumerate}
\item $h(m,\sigma)=\frac{h(\mu,\sigma^n)}{n}$ \item
$\lambda(m,\sigma)=\frac{\int S_n\psi\text{d}\mu}{n}$ \item $\int
\phi\text{d}m=\frac{\int S_n\phi\text{d}\mu}{n}$
\end{enumerate}
and if $\mu$ is ergodic with respect to $\sigma^n$ then $m$ is
ergodic with respect to $\sigma$.
\end{lemma}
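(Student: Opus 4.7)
The plan is to build $m$ by averaging $\mu$ along the first $n$ iterates of $\sigma$. Concretely, set
\[
m = \frac{1}{n}\sum_{k=0}^{n-1} \sigma^k_* \mu,
\]
so that for every bounded measurable $f$ one has $\int f\,dm = \frac{1}{n}\int S_n f\,d\mu$. First, I would check $\sigma$-invariance: $\sigma_* m = \frac{1}{n}\sum_{k=1}^{n}\sigma^k_*\mu$, which equals $m$ because $\sigma^n_*\mu = \mu$ by the $\sigma^n$-invariance of $\mu$. Specialising $f = \psi$ and $f = \phi$ then gives (2) and (3) directly.

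The main content of the lemma is the entropy identity (1). I would break it into two steps and then invoke Abramov. Step one: each measure $\sigma^k_*\mu$ is $\sigma^n$-invariant (since $\sigma^k$ commutes with $\sigma^n$), so by affinity of Kolmogorov--Sinai entropy on the convex set $\mathcal{M}_{\sigma^n}(\Sigma_A)$,
\[
h(m,\sigma^n) = \frac{1}{n}\sum_{k=0}^{n-1} h(\sigma^k_*\mu,\sigma^n).
\]
Step two: $h(\sigma^k_*\mu,\sigma^n) = h(\mu,\sigma^n)$ for every $k$, because the map $\sigma^k$ conjugates $\sigma^n$ with itself. Combining, $h(m,\sigma^n) = h(\mu,\sigma^n)$, and since $m$ is $\sigma$-invariant Abramov's theorem yields $h(m,\sigma) = h(m,\sigma^n)/n = h(\mu,\sigma^n)/n$, which is exactly (1).

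Ergodicity is a short argument at the end. If $A$ is a $\sigma$-invariant Borel set, then $A$ is also $\sigma^n$-invariant, and $\sigma^{-k}A = A$ gives $\sigma^k_*\mu(A) = \mu(A)$ for every $k$, so $m(A) = \mu(A)$. By $\sigma^n$-ergodicity of $\mu$ this is $0$ or $1$, and the same for $m$. A routine mod-null correction handles the case where $A$ is only $\sigma$-invariant up to $m$-measure zero.

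The only place I expect to have to be careful is step two above, since $\Sigma_A$ is a one-sided subshift and $\sigma$ is not invertible. The cleanest route is to work with the generator $\alpha$ consisting of the one-cylinders: the iterated partition $\bigvee_{j=0}^{n-1}\sigma^{-j}\alpha$ is a generator for $\sigma^n$, and
\[
H_{\sigma^k_*\mu}\Bigl(\bigvee_{j=0}^{N-1}\sigma^{-jn}\bigvee_{i=0}^{n-1}\sigma^{-i}\alpha\Bigr)
= H_\mu\Bigl(\bigvee_{\ell=k}^{k+nN-1}\sigma^{-\ell}\alpha\Bigr),
\]
which differs from $H_\mu(\bigvee_{\ell=0}^{nN-1}\sigma^{-\ell}\alpha)$ by at most $k\,H_\mu(\alpha) = O(1)$ via subadditivity. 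Dividing by $N$ and passing to the limit gives the desired equality of $\sigma^n$-entropies. Every other step reduces to a bookkeeping computation once the definition of $m$ is in place.
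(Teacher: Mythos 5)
Your proposal is correct and uses the same construction $m = \frac{1}{n}\sum_{k=0}^{n-1}\sigma^k_*\mu$ and the same key tool (Abramov's formula) as the paper, which simply states that part (1) ``follows from Abramov's formula'' and that (2), (3) are ``straightforward calculations.'' You have supplied the nontrivial details the paper elides --- the affinity of entropy on $\mathcal{M}_{\sigma^n}(\Sigma_A)$ and the generator argument for $h(\sigma^k_*\mu,\sigma^n)=h(\mu,\sigma^n)$ in the non-invertible setting --- but the route is the same.
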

\begin{proof}
   Let $m=\frac{1}{n}\sum_{i=0}^{n-1}\mu\circ\sigma^{-i}$. Property (1) follows
from Abramov's
   formula and properties (2) and (3) are straightforward calculations.
\end{proof}

To find good covers of $X_{\alpha}$ it is helpful to relate
$X_{\alpha}$ to sets defined by the behaviour of the ergodic ratio
of $\psi$ and $\phi$. If we fix $\alpha\in\R$ then we can use the
results in the previous section to obtain the following lemma.
\begin{lemma}
If $\underline{i}\in\Sigma_A\backslash E$, $\Pi\underline{i}=x$
and $x\in X_{\alpha}$ then
$$\liminf_{n\rightarrow\infty}-\frac{S_n\phi(\underline{i})}{S_n\psi(\underline{i})}=\alpha.$$
\end{lemma}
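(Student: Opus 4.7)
The strategy is a sandwich argument: I would relate $d_\nu(x) = \alpha$ to the ratios $-S_n\phi/S_n\psi$ via two carefully tuned radii, using the weak Gibbs property to estimate $\tilde\nu$ on cylinders and Lemma \ref{distortion} to estimate the cylinder diameters $D_n(\underline{i})$ in terms of $e^{-S_n\psi(\underline{i})}$, modulo subexponential factors.

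For the upper bound $\alpha \leq \liminf_n -S_n\phi(\underline{i})/S_n\psi(\underline{i})$ I would take $r_n = D_n(\underline{i})$. Since $x \in \Delta_n(\underline{i})$ and $\Delta_n(\underline{i})$ has diameter $D_n$, one has $\Delta_n(\underline{i}) \subset \overline{B}(x, D_n(\underline{i}))$, so the weak Gibbs property (with $P = 0$) gives $\nu(B(x, 2D_n)) \geq \tilde\nu([i_1\ldots i_n]) \geq e^{S_n\phi(\underline{i}) - nk_n}$, while Lemma \ref{distortion} gives $-\log D_n(\underline{i}) \geq S_n\psi(\underline{i}) - nK_n(\psi)$. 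Since $D_n \to 0$, taking the limit of $\log\nu(B(x,2D_n))/\log(2D_n)$, which exists and equals $\alpha$, yields
\[
\alpha \leq \liminf_n \frac{-S_n\phi(\underline{i}) + nk_n}{S_n\psi(\underline{i}) - nK_n(\psi)}.
\]

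For the matching lower bound, Lemma \ref{weakavoid} (applicable because $\underline{i} \notin E$) supplies a subsequence $n_j \to \infty$ and a constant $K > 0$ with $Z_{n_j}(\underline{i}) \geq K D_{n_j}(\underline{i}) e^{-n_j \rho_{n_j}}$. Setting $r_j = K D_{n_j}(\underline{i}) e^{-n_j \rho_{n_j}}$ ensures $B(x, r_j) \subset \Delta_{n_j}(\underline{i})$, so $\nu(B(x, r_j)) \leq e^{S_{n_j}\phi(\underline{i}) + n_j k_{n_j}}$, while $-\log r_j \leq S_{n_j}\psi(\underline{i}) + n_j(K_{n_j}(\psi) + \rho_{n_j}) - \log K$. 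Passing to $j \to \infty$ produces
\[
\alpha \geq \limsup_j \frac{-S_{n_j}\phi(\underline{i}) - n_j k_{n_j}}{S_{n_j}\psi(\underline{i}) + n_j(K_{n_j}(\psi) + \rho_{n_j}) - \log K}.
\]

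Writing $a_n := -S_n\phi(\underline{i})/S_n\psi(\underline{i})$, the two displayed bounds read $\alpha \leq \liminf_n a_n$ and $\alpha \geq \limsup_j a_{n_j} \geq \liminf_n a_n$ once the $o(n)$ error contributions $nk_n$, $nK_n(\psi)$, $n\rho_n$ are absorbed into the main terms, and this sandwich forces $\liminf_n a_n = \alpha$. The principal technical obstacle is precisely this absorption: if $S_n\psi$ grew too slowly, the errors could dominate the denominators. The resolution is that $\phi < 0$ on compact $\Sigma_A$ implies $\phi \leq -c$ for some $c > 0$, and thus $-S_n\phi \geq cn$; on any subsequence along which $a_n$ converges to a finite limit, $S_n\psi$ must then grow at a positive linear rate, dominating the errors. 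The degenerate case $\liminf_n a_n = +\infty$ (e.g.\ trajectories accumulating on parabolic points) corresponds to $\alpha = \infty$ and is handled directly from the lower bound.
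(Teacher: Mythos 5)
Your proof is correct and follows essentially the same route as the paper: both directions rest on the inclusion $B(x,\cdot)\subseteq\Delta_n(\underline{i})$ supplied by Lemma \ref{weakavoid} along a subsequence, together with the reverse inclusion $\Delta_n(\underline{i})\subseteq B(x,\cdot)$ valid for all $n$, combined with the weak Gibbs bounds, Lemma \ref{distortion}, and the observation that $-S_n\phi\geq cn$ forces linear growth of $S_n\psi$ along the relevant subsequences. Your explicit handling of the error absorption and of the degenerate case $\alpha=\infty$ merely spells out details the paper treats more tersely (``in a similar way'').
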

\begin{proof}
We combine Lemma \ref{weakavoid} and the tempered distortion
property to see that there exists a subsequence $\{n_m\}_{m\in\N}$
such that for each $m\in\N$
$$B(x,K^{-1}e^{-2n_m\rho_{n_m}-S_{n_m}\psi(\underline{i})})\subseteq\Delta_n(\underline{i}).$$
Thus for $\epsilon>0$ and $m$ sufficiently large we have that
\begin{eqnarray*}
\nu(\Delta_{n_m}(\underline{i}))&\geq&
\nu(B(x,K^{-1}e^{-2n_m\rho_{n_m}
-S_{n_m}\psi(\underline{i})}))\\
&\geq&
(K^{-1}e^{-2n_m\rho_{n_m}-S_{n_m}\psi(\underline{i})})^{\alpha+\epsilon}.
\end{eqnarray*}
At the same time,

\[
\nu(\Delta_{n_m}(\underline{i}))\leq K e^{n_m \rho_{n_m} + S_{n_m}
\phi(\underline{i})}.
\]
Note that $-\frac 1 n S_n \phi$ is uniformly bounded away from
zero. This implies that $\frac 1 n S_n\psi(\underline{i})$ is
bounded away from zero as well, dominating $\rho_n$. Hence,

\[
e^{S_{n_m}\phi(\underline{i})} \geq  K^{-\alpha -\epsilon -1}
e^{-(2\alpha+ 2\epsilon+1)n_m \rho_{n_m}} \cdot
e^{-(\alpha+\epsilon) S_{n_m} \psi(\underline{i})}
\]
proves the inequality in one direction. The other direction
follows from
\[
B(x,Ke^{n\rho_n-S_n\psi(\underline{i})})\supseteq\Delta_n(\underline{i})
\]
(it holds for all $n$ by the tempered distortion property) in a
similar way.
\end{proof}

So if we let

\[
Y_{\alpha}= \left\{\underline{i}\in\Sigma_A:
\liminf_{n\rightarrow\infty}-\frac{S_n\phi(\underline{i})}{S_n\psi(\underline{i})}=\alpha\right\}
\]
then we can deduce that
$$\dim X_{\alpha}\leq \dim\Pi Y_{\alpha}.$$
To obtain the upper bound for Theorem \ref{main} we will calculate
an upper bound for the dimension of $\Pi Y_{\alpha}$. Let

\[
Y_{\alpha,\epsilon}=\left\{\underline{i}\in\Sigma_A:
\alpha-\epsilon \leq
-\frac{S_n\phi(\underline{i})}{S_n\psi(\underline{i})}\leq\alpha+\epsilon\text{
for infinitely many }n\in\N\right\}.
\]
Let

\[
Y_{n,\alpha,\epsilon}=\left\{[i_1,\ldots,i_n]:\exists\underline{i}\in
[i_1,\ldots,i_n]\text{ with
}\left(-\frac{S_n\phi(\underline{i})}{S_n\psi(\underline{i})}\right)\in(\alpha-\epsilon,
\alpha+\epsilon)\right\}
\]
and note that for all $N\in \N$ we
have $\cup_{n\geq N}Y_{n,\alpha,\epsilon}\supseteq
Y_{\alpha,\epsilon}$. We now define $s_n$ to satisfy
\begin{equation} \label{noidea}
\sum_{[i_1,\ldots,i_n]\in
Y_{n,\alpha,\epsilon}}(D_n(\tilde{\underline{i}}))^{s_n}=1
\end{equation}
where $\tilde{\underline{i}}$ denotes some element of the cylinder
$[i_1,\ldots,i_n]\in Y_{n,\alpha,\epsilon}$. Let
$$s=\limsup_{n\rightarrow\infty}s_n.$$

As $-\frac 1 n S_n\phi$ is uniformly bounded away from zero, for
all cylinders $[i_1,\ldots,i_n]\in Y_{n,\alpha,\epsilon}$ we have
$-\frac 1 n \log D_n(\underline{i})$ also bounded away from zero
for $\underline{i}\in [i_1,\ldots,i_n]$. It is thus clear that for
any $\delta>0$ there exists $N>0$ such that
\begin{equation} \label{noidea2}
\sum_{n>N}\sum_{Y_{n,\alpha,\epsilon}}(D_n(\tilde{\underline{i}}))^{s+\delta}<1.
\end{equation}
Consider now for increasing $N$ the unions $\bigcup_{n>N}
Y_{n,\alpha,\epsilon}$, each of them forms a covering for $\Pi
Y_{\alpha,\epsilon}$. By \eqref{noidea2}, it follows that
$$\dim\Pi Y_{\alpha,\epsilon}\leq s.$$

We now want to relate this value $s$ to the entropy and Lyapunov
exponent of an invariant measure. To do this we introduce a class
of measures which will not be $\sigma$-invariant but will be
$\sigma^m$ invariant for some $m$. Let
$$C_n=\{[i_1,\ldots,i_n]:[i_1,\ldots,i_n]\cap\Sigma_A\neq\emptyset\}$$
and $q:C_n\rightarrow [0,1]$ satisfy $\sum_{[i_1,\ldots,i_n]\in
C_n}q(C_n)=1$. Let $k(n)$ be the smallest integer such that for
any $\underline{i}^n,\underline{j}^n\in C_n$ there exists
$\omega(\underline{i},\underline{j})\in C_k$ such that
$$[i_1,\ldots,i_n,\omega_1,\ldots,\omega_k,j_1,\ldots,j_n]\cap\Sigma_A\neq\emptyset$$
and
$$\inf\{\exp(S_{n+k}\psi(\underline{i}):\underline{i}\in [i_1,\ldots,i_n,\omega_1,\ldots,\omega_k])\}>1.$$
We can then use our function $q$ to define a $\sigma^{n+k}$ invariant measure
$\mu_q$. We define the measure on cylinders of level $ln+(l-1)k$ by setting
$$\mu_q([\underline{i}_1^{n}\omega(\underline{i}_1^{n},\underline{i}_2^{n})
\underline{i}_2^{n}\cdots\omega(\underline{i}_{l-1}^{n},\underline{i}_l^{n}),
\underline{i}_l^{n}])=\prod_{m=1}^l q(\underline{i}_m^{n})$$ and
the measure of all cylinders of level $ln+(l-1)k$ not of this form
to be $0$. We will write the space of all such measures as
$M_{n}(\Sigma_A)$. The following Lemma describes the behaviour of
these measures.

\begin{lemma} \label{muq}
The family of measures $M_{n}(\Sigma_A)$ satisfies the following:
\begin{enumerate}
\item
If $\mu_q\in M_n(\Sigma_A)$ then $\mu_q$ is $\sigma^{n+k}$-invariant and ergodic.
\item
For $\mu_q\in M_n(\Sigma_A)$ we have that
$$h(\mu_q,\sigma^{n+k})=-\sum_{[i_1,\ldots,i_n]\in C_n}q(i_1,\ldots,i_n)\log q(i_1,\ldots,i_n)$$
with the usual convention $0\log 0=0$. \item For any $\mu_q\in
M_n(\Sigma_A)$ the set $\Pi(\text{supp }\mu_q)$ will not contain
any parabolic points. In particular the measure
$\nu_q=\left(\frac{1}{n+k}\sum_{i=0}^{n+k-1}\mu_q\circ\sigma^{-i}\right)\circ\Pi^{-1}$
will be uniformly hyperbolic. \item The measures $\mu_q\in
M_n(\Sigma_A)$ vary continuously, with $q$, in the weak* topology.
\item We have
\[
\left|\int \frac 1 {n+k} S_{n+k}\psi d\mu_q -\frac 1 n
\sum_{\underline{i}^n\in C_n} q(\underline{i}^n)
S_n\psi(\underline{i})\right| \leq \frac {kL} {n+k} +\rho_n
\]
and
\[
\left|\int \frac 1 {n+k} S_{n+k}\phi d\mu_q -\frac 1 n
\sum_{\underline{i}^n\in C_n} q(\underline{i}^n)
S_n\phi(\underline{i})\right| \leq \frac {kL} {n+k} + \rho_n
\]
where $\underline{i}$ is an arbitrary point from
$[\underline{i}^n]$ and $L=\max(\sup |\phi|, \sup \psi)$.
\end{enumerate}
\end{lemma}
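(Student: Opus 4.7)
The unifying observation is that $\mu_q$ is, up to the deterministic bridges $\omega$, a Bernoulli measure on the alphabet $C_n$ with weights given by $q$; I would organise the proof around this viewpoint. For items (1) and (2) the plan is to view a $\mu_q$-random sequence as a concatenation of i.i.d.\ blocks $\underline{i}_l^n$ drawn with law $q$, interspersed with bridges that are measurable functions of consecutive blocks. Invariance of $\mu_q$ under $\sigma^{n+k}$ is then immediate from this product structure. The restriction of $\sigma^{n+k}$ to the generating partition by the allowed $(n+k)$-cylinders is isomorphic to a Bernoulli shift on $C_n$, so $\mu_q$ is ergodic under $\sigma^{n+k}$, and its Kolmogorov--Sinai entropy with respect to this generating partition is exactly $-\sum_{\underline{i}^n\in C_n} q(\underline{i}^n)\log q(\underline{i}^n)$, establishing (1) and (2).

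For (3) the key point is that along any parabolic periodic orbit one has $|T'|\equiv 1$, hence $\psi\equiv 0$ and $S_{n+k}\psi\equiv 0$ on it. The hypothesis $\inf\{\exp S_{n+k}\psi(\underline{i}):\underline{i}\in[i_1,\ldots,i_n,\omega_1,\ldots,\omega_k]\}>1$ forces $S_{n+k}\psi>0$ on every admissible $(n+k)$-cylinder appearing in the support, and since $C_n$ is finite the resulting bound is uniform in the cylinder. Therefore the symbolic support of $\mu_q$ is disjoint from every parabolic periodic orbit, and the uniform hyperbolicity of $\nu_q$ follows since symmetrising over the $n+k$ shifts and pushing through $\Pi$ preserves this disjointness. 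Item (4) is the standard observation that the $\mu_q$-measure of each cylinder of a fixed level is a monomial in the entries of $q$, so integration of any continuous function against $\mu_q$ depends continuously on $q$ by uniform approximation of continuous functions by combinations of cylinder indicators.

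For item (5) the plan is to decompose $S_{n+k}\psi(\underline{i})=S_n\psi(\underline{i})+\sum_{j=n}^{n+k-1}\psi(\sigma^j\underline{i})$; the bridge piece is pointwise bounded by $kL$. Integrating against $\mu_q$ and using the i.i.d.\ structure, the first piece contributes $\sum_{\underline{i}^n\in C_n} q(\underline{i}^n)S_n\psi(\underline{i})$ for any choice of representative $\underline{i}\in[\underline{i}^n]$, with a tempered distortion error of at most $n\rho_n$. Dividing by $n+k$ and replacing the prefactor by $\frac{1}{n}$ as required costs another $\frac{kL}{n+k}$ since $|S_n\psi|\le nL$, while the distortion contribution $\frac{n\rho_n}{n+k}$ is at most $\rho_n$; combining these bounds yields the stated inequality, and the identical argument applied to $\phi$ settles the second. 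The main obstacle is the hyperbolicity claim (3): one must ensure that the strict inequality $\exp(S_{n+k}\psi)>1$, which is assumed only on individual allowed cylinders, actually propagates to a uniform exclusion of parabolic orbits from the closed support of $\mu_q$, and finiteness of $C_n$ is precisely what makes this propagation work.
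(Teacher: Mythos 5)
Your proposal is correct and follows essentially the same route as the paper: identify $\mu_q$ with a Bernoulli measure on the alphabet $C_n$ via the map that deletes the bridges, read (1)--(2) off the Bernoulli structure, deduce (3) from the expansion condition built into the definition of $\omega$ together with the fact that $\psi\equiv 0$ along parabolic orbits, obtain (4) from polynomial dependence of cylinder measures on $q$, and obtain (5) from tempered distortion plus boundedness of the bridge contribution. One small inefficiency in (5): bounding the bridge term and the prefactor change separately gives $\frac{2kL}{n+k}$ rather than the stated $\frac{kL}{n+k}$; to recover the exact constant, note that since $\psi\ge 0$ the two error terms $\frac{1}{n+k}\int\bigl(S_{n+k}\psi-S_n\psi\bigr)d\mu_q\in\bigl[0,\frac{kL}{n+k}\bigr]$ and $\bigl(\frac{1}{n+k}-\frac{1}{n}\bigr)\int S_n\psi\,d\mu_q\in\bigl[-\frac{kL}{n+k},0\bigr]$ have opposite signs and so their sum already lies in $\bigl[-\frac{kL}{n+k},\frac{kL}{n+k}\bigr]$ (and similarly for $\phi$ using $\phi<0$); this is cosmetic and does not affect any application of the lemma.
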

\begin{proof}
Let $n\in\N$ and $q$ be a function defining a measure, $\mu_q$, in
$M_n(\Sigma_A)$. This measure is going to be defined as a
$\sigma^n$ invariant Bernoulli measure, that is, we take a
Bernoulli measure defined on $(\{1,\ldots,p\}^n)^\N$ (defined by
describing its values on each cylinder $[i_1,\ldots, i_n]$) and
then transport this measure to $\Sigma$ by the usual
identification. We set $\eta_q$ by
$$\eta_q([i_1,\ldots,i_n])=q([i_1,\ldots,i_n])\text{ if }[i_1,\ldots,i_n]\cap\Sigma_A\neq\emptyset$$
and
$$\eta_q([i_1,\ldots,i_n])=0\text{ if }[i_1,\ldots,i_n]\cap\Sigma_A=\emptyset.$$
We can build a map, $\gamma:\text{supp}\mu_q\rightarrow\Sigma$ by
$$\gamma(\underline{i}_1^{n}\omega(\underline{i}_1^{n},\underline{i}_2^{n})
\underline{i}_2^{n}\cdots\omega(\underline{i}_{l-1}^{n},\underline{i}_l^{n})
\underline{i}_l^{n}\cdots)=\underline{i}_1^{n}\underline{i}_2^{n}\cdots
,\underline{i}_l^{n}\cdots$$ which maps $\mu_q$ onto $\eta_q$. In
fact this is an isomorphism of the dynamical systems $(\Sigma,
\sigma^{n+k}, \mu_q)$ and $(\Sigma, \sigma^n, \eta_q)$. Parts 1
and 2 follow immediately. Part 3 follows from the definition of
$\omega$. Part 4 follows from the definition of $\gamma$ and the
weak* continuity of Bernoulli measures with respect to the
generating vector.

To see part 5 we note that
\[
\left|\int \frac 1 n S_n\psi d\mu_q -\frac 1 n
\sum_{\underline{i}^n\in C_n}
q(\underline{i}^n) S_n\psi(\underline{i})\right| \leq \rho_n
\]
and
\[
\left|\int \frac 1 n S_n\phi d\mu_q -\frac 1 n
\sum_{\underline{i}^n\in C_n}
q(\underline{i}^n) S_n\phi(\underline{i})\right| \leq \rho_n
\]
follow from tempered distortion. The assertion now follows by direct calculation.
\end{proof}

The importance of this family of measures is that certain measures in
this family can be related to $s$, the upper bound for $\dim X_{\alpha}$.

\begin{lemma} \label{density}
We can find a sequence of  measures
$\mu_n\in M_n(\Sigma_A)$ such that
$$\alpha - \epsilon \leq \liminf_{n\rightarrow\infty}-\frac{\int
S_{n}\phi\text{d}\mu_n}{\int S_{n}\psi\text{d}\mu_n}\leq
\limsup_{n\rightarrow\infty}-\frac{\int
S_{n}\phi\text{d}\mu_n}{\int S_{n}\psi\text{d}\mu_n}
\leq\alpha+\epsilon,
$$
and
$$\lim_{n\rightarrow\infty}\left( \frac{h(\mu_n,\sigma^{n+k})}{\int S_{n}\psi\text{d}\mu_n} - s_n\right) =0.$$
Those measures are generated with $q$ supported in $Y_{n,\alpha, \epsilon}$.
\end{lemma}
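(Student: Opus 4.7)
The natural candidate is to take $q_n$ proportional to a power of the diameters: define
\[
q_n(\underline{i}^n) = D_n(\tilde{\underline{i}}^n)^{s_n}
\]
for $\underline{i}^n \in Y_{n,\alpha,\epsilon}$ and $q_n(\underline{i}^n)=0$ otherwise, where $\tilde{\underline{i}}^n$ is the same representative used in \eqref{noidea}. By \eqref{noidea} this is a probability distribution on $C_n$, and I let $\mu_n := \mu_{q_n}\in M_n(\Sigma_A)$ be the associated measure from Lemma \ref{muq}.

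The plan for the entropy identity: by part (2) of Lemma \ref{muq},
\[
h(\mu_n,\sigma^{n+k}) = -\sum q_n(\underline{i}^n)\log q_n(\underline{i}^n) = s_n \sum q_n(\underline{i}^n)\bigl(-\log D_n(\tilde{\underline{i}}^n)\bigr).
\]
Tempered distortion (Lemma \ref{distortion}) gives $-\log D_n(\tilde{\underline{i}}^n) = S_n\psi(\tilde{\underline{i}}^n)+O(n\rho_n)$ uniformly, so
\[
h(\mu_n,\sigma^{n+k}) = s_n\,A_n + O(s_n n\rho_n),\qquad A_n:=\sum_{\underline{i}^n} q_n(\underline{i}^n)\,S_n\psi(\tilde{\underline{i}}^n).
\]
Similarly, since $\mu_n$ is concentrated on concatenations that start with a block $\underline{i}^n$, a direct computation (of the sort underlying part (5) of Lemma \ref{muq}) yields $\int S_n\psi\,d\mu_n = A_n + O(n\rho_n)$ and $\int S_n\phi\,d\mu_n = B_n + O(n\rho_n)$, with $B_n := \sum q_n S_n\phi(\tilde{\underline{i}}^n)$. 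The hypothesis $\phi<0$ uniformly on $\Sigma_A$, together with the inclusion $\mathrm{supp}\,q_n\subset Y_{n,\alpha,\epsilon}$ (so $-S_n\phi/S_n\psi\leq \alpha+\epsilon$), forces $A_n\gtrsim n$ with a constant independent of $n$. Hence the error $n\rho_n$ is negligible against $A_n$, and since $s_n$ is uniformly bounded (because $\sum D_n^{s_n}=1$ and $\max D_n\to 0$), we obtain
\[
\frac{h(\mu_n,\sigma^{n+k})}{\int S_n\psi\,d\mu_n} = s_n\cdot\frac{A_n+O(n\rho_n)}{A_n+O(n\rho_n)} = s_n + O(\rho_n),
\]
which tends to $s_n$ in the stated sense.

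For the ratio condition, each $\underline{i}^n$ in the support of $q_n$ satisfies $\alpha-\epsilon \le -S_n\phi(\tilde{\underline{i}}^n)/S_n\psi(\tilde{\underline{i}}^n)\le \alpha+\epsilon$, so the pointwise bound $(\alpha-\epsilon)S_n\psi(\tilde{\underline{i}}^n)\le -S_n\phi(\tilde{\underline{i}}^n)\le (\alpha+\epsilon)S_n\psi(\tilde{\underline{i}}^n)$ survives taking the $q_n$-weighted sum, giving $-B_n/A_n\in[\alpha-\epsilon,\alpha+\epsilon]$. Dividing $-(B_n+O(n\rho_n))$ by $A_n+O(n\rho_n)$ and invoking the lower bound $A_n\gtrsim n$ to absorb the error into a multiplicative $1+O(\rho_n)$ factor yields
\[
\alpha-\epsilon \le \liminf_{n\to\infty} -\frac{\int S_n\phi\,d\mu_n}{\int S_n\psi\,d\mu_n} \le \limsup_{n\to\infty} -\frac{\int S_n\phi\,d\mu_n}{\int S_n\psi\,d\mu_n} \le \alpha+\epsilon,
\]
as required.

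The only place that needs care, and which I regard as the main obstacle, is verifying that the denominator $A_n$ genuinely grows linearly so that all $O(n\rho_n)$ tempered-distortion errors are harmless; this is why the standing normalisation $\phi<0$ (hence $-\tfrac{1}{n}S_n\phi$ bounded away from $0$, forcing $\tfrac{1}{n}S_n\psi$ away from $0$ on the support of $q_n$) is essential. Once that is in place, boundedness of $s_n$ and the crude estimate for the error in passing between $\int S_n\psi\,d\mu_n$ and $A_n$ (a direct computation using the block structure of $\mu_n$, analogous to part (5) of Lemma \ref{muq}) make both conclusions routine.
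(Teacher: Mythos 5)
Your proof is correct and follows essentially the same route as the paper's: you make the identical choice $q_n \propto D_n^{s_n}$ supported on $Y_{n,\alpha,\epsilon}$, invoke the same parts of Lemmas \ref{muq} and \ref{distortion}, and use $\phi<0$ together with the $Y_{n,\alpha,\epsilon}$ constraint to show the relevant Birkhoff sums grow linearly so that the tempered-distortion errors are absorbed. One small correction: your parenthetical justification for the boundedness of $s_n$ (``$\sum D_n^{s_n}=1$ and $\max D_n\to 0$'') is not by itself sufficient; what is actually needed is the exponential decay of $D_n$ on cylinders in $Y_{n,\alpha,\epsilon}$ (noted in the paper just before the lemma), which is precisely the same fact you already use to obtain $A_n\gtrsim n$, so the fix is immediate.
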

\begin{proof}
Given $\underline{i}^n\in C_n$, we let
$q(\underline{i}^n)=(D_n(\underline{i}))^{s_n}$ if
$\underline{i}^n\in Y_{n,\alpha, \epsilon}$ and
$q(\underline{i}^n)=0$ otherwise. We consider the corresponding
measure $\mu_q\in M_n$.
Let us estimate  the relevant parameters for the measure $\mu_q$.
As $\frac 1 n S_n\phi$ is bounded away from zero, the definition
of $Y_{n,\alpha, \epsilon}$ implies that $\frac 1 n S_n \psi$ is
bounded away from zero as well, hence by Lemma \ref{muq},
statement 5

\[
\alpha - \epsilon - o(1) \leq -\frac {\int S_{n+k}\phi d\mu_q}
{\int S_{n+k}\psi d\mu_q} \leq \alpha+\epsilon+o(1),
\]
hence

\begin{equation} \label{somename3}
\alpha - \epsilon - o(1) - O(1/n) \leq -\frac {\int S_n\phi
d\mu_q} {\int S_n\psi d\mu_q} \leq \alpha+\epsilon+o(1)+O(1/n),
\end{equation}
which proves the first part of the assertion. We also have that

\[
h(\mu_q,\sigma^{n+k})=-s_n \sum_{\underline{i}^n\in
Y_{n,\alpha,\epsilon}} (D_n(\underline{i}^n))^{s_n} \log
D_n(\underline{i}^n).
\]

On the other hand, by combining Lemma \ref{distortion} and Lemma
\ref{muq} statement 5 we can deduce that
\begin{equation} \label{somename2}
\left|\int \frac 1 n S_n\psi d\mu_q + \frac 1 n
\sum_{\underline{i}^n\in Y_{n,\alpha,\epsilon}}
(D_n(\underline{i}^n))^{s_n} \log D_n(\underline{i}^n)\right| \leq
\left|\int \psi d\mu_q - \frac 1 n \sum_{\underline{i}^n\in
Y_{n,\alpha, \epsilon}} q(\underline{i}^n)
S_n\psi(\underline{i})\right|+\rho_n \to 0.
\end{equation}
  As the left hand side of \eqref{somename2} is
a difference of two terms, the first of which is bounded away from
zero by \eqref{somename3}, the second term is bounded as well. The second part of assertion follows.
\end{proof}
By combining this lemma with Lemma \ref{approx} we get an immediate corollary.

\begin{corollary} \label{ergodicdensity}
We can find a sequence of ergodic measures
$\mu_n\in\mathcal{M}_\sigma(\Sigma_A)$ such that
$$\alpha-\epsilon \leq\lim_{n\rightarrow\infty}-\frac{\int
\phi\text{d}\mu_n}{\lambda(\mu_n)}\leq\alpha+\epsilon$$ and
$$\limsup_{n\rightarrow\infty}\frac{h(\mu_n,\sigma)}{\int \psi\text{d}\mu_n}\geq s.$$
Moreover these measures are uniformly hyperbolic.
\end{corollary}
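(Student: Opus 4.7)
The plan is to combine Lemma~\ref{density} with Lemma~\ref{approx}. Let $\tilde\mu_n\in M_n(\Sigma_A)$ be the measures produced by Lemma~\ref{density}; by Lemma~\ref{muq}.1 each $\tilde\mu_n$ is $\sigma^{n+k}$-invariant and ergodic. Applying Lemma~\ref{approx} with its parameter equal to $n+k$, I set
\[
\mu_n := \frac{1}{n+k}\sum_{i=0}^{n+k-1} \tilde\mu_n\circ\sigma^{-i} \in \mathcal{M}_\sigma(\Sigma_A),
\]
which is ergodic and satisfies the Abramov identities $h(\mu_n,\sigma) = h(\tilde\mu_n,\sigma^{n+k})/(n+k)$, $\lambda(\mu_n) = \int S_{n+k}\psi\,d\tilde\mu_n/(n+k)$, and $\int\phi\,d\mu_n = \int S_{n+k}\phi\,d\tilde\mu_n/(n+k)$. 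Uniform hyperbolicity of $\mu_n$ is precisely the conclusion of Lemma~\ref{muq}.3, so $\mu_n\in\mathcal{M}_H(\Sigma_A)$.

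The key step is to convert the estimates of Lemma~\ref{density}, which concern integrals of $S_n\phi$ and $S_n\psi$ against $\tilde\mu_n$, into the desired asymptotic statements for $\mu_n$. The Abramov identities above yield
\[
-\frac{\int\phi\,d\mu_n}{\lambda(\mu_n)} = -\frac{\int S_{n+k}\phi\,d\tilde\mu_n}{\int S_{n+k}\psi\,d\tilde\mu_n},\qquad \frac{h(\mu_n,\sigma)}{\lambda(\mu_n)} = \frac{h(\tilde\mu_n,\sigma^{n+k})}{\int S_{n+k}\psi\,d\tilde\mu_n}.
\]
Since $|S_{n+k}f - S_n f|\leq k\sup|f|$ for $f\in\{\phi,\psi\}$, and since $\tfrac{1}{n}S_n\psi$ is bounded away from zero on $\text{supp}\,\tilde\mu_n\subset Y_{n,\alpha,\epsilon}$ (so that $\int S_n\psi\,d\tilde\mu_n$ grows linearly with $n$), replacing $S_{n+k}$ by $S_n$ in each numerator and denominator changes each ratio only by a multiplicative factor $1+O(k/n)$ plus an additive correction $O(k/n)$. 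The transition length $k$ is uniformly bounded in $n$ by the mixing index of $A$, so these corrections are $o(1)$.

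Applying Lemma~\ref{density} therefore yields $-\int\phi\,d\mu_n/\lambda(\mu_n) \in [\alpha-\epsilon-o(1),\,\alpha+\epsilon+o(1)]$ and $h(\mu_n,\sigma)/\lambda(\mu_n) = s_n + o(1)$; passing to the $\limsup$ in the latter gives $\limsup_n h(\mu_n,\sigma)/\lambda(\mu_n) = \limsup_n s_n = s$, as required. The only real obstacle is the bookkeeping of the $O(k/n)$ corrections arising from the mismatch between $\sigma^{n+k}$-invariance and $\sigma$-invariance; once one observes that $k$ remains bounded while $n\to\infty$, the corollary reduces entirely to the content of Lemmas~\ref{density}, \ref{approx} and~\ref{muq}.
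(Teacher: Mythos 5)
Your proposal follows essentially the same route as the paper's proof: take the measures $\tilde\mu_n\in M_n(\Sigma_A)$ from Lemma~\ref{density}, average over a $\sigma$-orbit of length $n+k$ via Lemma~\ref{approx} (which preserves ergodicity and yields the Abramov identities), invoke Lemma~\ref{muq}(3) for uniform hyperbolicity, and use the uniform boundedness of $k$ to show the swap between $S_n$ and $S_{n+k}$ (and hence between the $\sigma^{n+k}$- and $\sigma$-averaged quantities) only introduces $o(1)$ error. The only cosmetic difference is that the paper explicitly passes to a subsequence to make the limit of $-\int\phi\,d\mu_n/\lambda(\mu_n)$ exist, while you leave that implicit; this is a harmless omission.
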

\begin{proof}
We consider $\mu_q\in M_n(\Sigma_A)$ and choose a subsequence for
which the limit of $\int \phi d\mu_{q_i}/\int \psi d\mu_{q_i}$
exists. By Lemma \ref{approx} we can find an ergodic measure
$\tilde{\mu}\in \mathcal{M}_{\sigma}(\Sigma_A)$ such that
$h(\tilde{\mu})=\frac{1}{n+k} h(\mu_q,\sigma^{n+k})$, $\int
\psi\text{d}\tilde{\mu}=\frac{1}{n+k}\int
S_{n+k}\psi\text{d}\mu_q$ and $\int
\phi\text{d}\tilde{\mu}=\frac{1}{n+k}\int
S_{n+k}\phi\text{d}\mu_q$. Since $k$ is independent of $n$ the
result easily follows from Lemma \ref{density}.
\end{proof}

In the proof of Theorem \ref{main3} we will need the sequence
$s_m$ to be convergent. This can be deduced from Lemma
\ref{density}.

\begin{corollary} \label{sn}
In the above construction we have that
$s=\limsup_{n\rightarrow\infty}s_n=\lim_{n\rightarrow\infty}s_n$.
In other words the sequence $s_n$ is convergent.
\end{corollary}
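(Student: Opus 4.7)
The plan is to establish the missing direction $\liminf_n s_n \geq s$. The strategy is to produce a single ergodic, uniformly hyperbolic measure $\tilde\mu$ (not a sequence depending on $n$) whose ratio $h(\tilde\mu)/\lambda(\tilde\mu)$ is arbitrarily close to $s$ and whose Birkhoff ratio $-\int\phi\,d\tilde\mu/\lambda(\tilde\mu)$ lies strictly inside $(\alpha-\epsilon,\alpha+\epsilon)$; Shannon--McMillan--Breiman then gives enough typical $n$-cylinders inside $Y_{n,\alpha,\epsilon}$ to force $s_n$ to be close to $s$ for \emph{every} large $n$, not just along the $\limsup$-subsequence.

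Fix $\eta>0$. Corollary \ref{ergodicdensity}, applied along the subsequence realising $\limsup s_n=s$, gives an ergodic, uniformly hyperbolic measure with $h/\lambda\geq s-\eta/2$ and Birkhoff ratio in $[\alpha-\epsilon,\alpha+\epsilon]$. If that ratio sits on the boundary, I would form a small convex combination with any invariant measure of strictly interior ratio (such a measure exists because $\alpha\in(\alpha_{\min},\alpha_{\max})$ and the functional $\mu\mapsto -\int\phi\,d\mu/\lambda(\mu)$ attains all interior values on $\mathcal{M}_\sigma(\Sigma_A)$ by the same convex-combination argument used in the introduction to produce $\alpha_{\max}=\infty$). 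An ergodic component of the resulting measure then furnishes $\tilde\mu$ with $h(\tilde\mu)/\lambda(\tilde\mu)\geq s-\eta$ and Birkhoff ratio strictly inside $(\alpha-\epsilon,\alpha+\epsilon)$.

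For every $n$, Shannon--McMillan--Breiman and Birkhoff's theorem yield a set $G_n$ of $n$-cylinders with $\tilde\mu(G_n)\to 1$ whose members $[i^n]$ satisfy $\tilde\mu([i^n])\leq\exp(-n(h(\tilde\mu)-\eta))$, $S_n\psi(\tilde{\underline{i}})\leq n(\lambda(\tilde\mu)+\eta)$ (hence $D_n\geq\exp(-n(\lambda(\tilde\mu)+\eta+\rho_n))$ by Lemma \ref{distortion}), and $-S_n\phi(\tilde{\underline{i}})/S_n\psi(\tilde{\underline{i}})\in(\alpha-\epsilon,\alpha+\epsilon)$, so that $[i^n]\in Y_{n,\alpha,\epsilon}$. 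In particular $|G_n|\geq(1-o(1))\exp(n(h(\tilde\mu)-\eta))$. Setting $t=(h(\tilde\mu)-2\eta)/(\lambda(\tilde\mu)+2\eta)$, I would estimate
\[
\sum_{[i^n]\in Y_{n,\alpha,\epsilon}} D_n(\tilde{\underline{i}})^t \geq \sum_{[i^n]\in G_n} D_n^t \geq (1-o(1))\exp\bigl(n[(h(\tilde\mu)-\eta)-t(\lambda(\tilde\mu)+\eta+\rho_n)]\bigr) \geq 1
\]
for all large $n$. Since the map $t\mapsto \sum_{Y_{n,\alpha,\epsilon}} D_n^t$ is strictly decreasing and equals $1$ precisely at $t=s_n$, this forces $s_n\geq t$. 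Letting $\eta\to 0$ yields $\liminf_n s_n\geq s$, as required.

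The main obstacle is the construction of $\tilde\mu$ with strictly interior Birkhoff ratio: Corollary \ref{ergodicdensity} only delivers a closed-interval ratio, and the perturbation by convex combination must preserve $h/\lambda$ up to $O(\eta)$ while moving the ratio off the boundary. Everything else is a routine Shannon--McMillan cylinder count combined with tempered distortion and the strict monotonicity of the partition function.
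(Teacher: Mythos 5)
Your overall strategy is the same as the paper's: fix one good ergodic, uniformly hyperbolic measure supplied by Corollary \ref{ergodicdensity}, then count its Shannon--McMillan/Birkhoff-typical $n$-cylinders to force a lower bound on $s_n$ for \emph{all} large $n$. The cylinder-counting estimate you carry out (taking $t=(h-2\eta)/(\lambda+2\eta)$, verifying the exponent $(h-\eta)-t(\lambda+\eta+\rho_n)>0$, and concluding $s_n\geq t$ from monotonicity of the partition function) is correct and is essentially what the paper does with its ``measure $>2/3$'' bookkeeping.

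The genuine gap is in your patch for the boundary case. You correctly identify that if the Birkhoff ratio of the measure sits exactly at $\alpha\pm\epsilon$, then typical $n$-cylinders need not land in $Y_{n,\alpha,\epsilon}$, so you need to nudge the ratio inside. But your proposed fix --- form a convex combination $t\mu_n+(1-t)\nu$ with an interior-ratio measure $\nu$ and then ``take an ergodic component'' --- does not produce anything new: the ergodic components of a convex combination of two ergodic measures are just the two original measures (ergodic measures are extreme points of $\mathcal{M}_\sigma(\Sigma_A)$). You would get back either $\mu_n$ (still boundary ratio) or $\nu$ (no control on $h/\lambda$). There is no ergodic measure ``in between'' to extract. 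The paper sidesteps the boundary problem in a different, cheaper way: instead of perturbing the measure it enlarges the target, observing that for $\mu_n$-a.e. $\underline{i}$ the cylinder $[i_1,\ldots,i_m]$ lies in $Y_{m,\alpha,2\epsilon}$ for all large $m$, and runs the count against $Y_{m,\alpha,2\epsilon}$. If you want to keep your structure, you should replace the ergodic-component step by this relaxation of $\epsilon$ (or, alternatively, by producing an interior-ratio ergodic measure via a genuinely ergodic construction, e.g.\ a Gibbs state for a slightly perturbed potential as in Lemma 4.2 of \cite{O}, rather than by ergodic decomposition of a convex combination).
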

\begin{proof}
Consider the measure $\nu_n = \mu_n \circ \Pi^{-1}$, where $\mu_n$
is defined in Corollary \ref{ergodicdensity}. As it is ergodic,
its Hausdorff dimension equals its local dimension at a typical
point. By Lemma \ref{strongavoid} at a typical point its local
dimension is equal to its symbolic local dimension $\lim \log
\nu_n(\Delta_n(\underline{i}))/\log D_n(\underline{i})$. By
Birkhoff Ergodic Theorem, there exists some $N$ such that for all
$m>N$ the $m$-th level cylinders $[i_1,\ldots,i_m]$ for which

\[
D_m(\underline{i})^{\dim \nu_n - \epsilon} >
\mu_n([i_1,\ldots,i_m])
\]
have total measure $\mu_n$ greater than 2/3.

As $\mu_n$ is ergodic, for $\mu_n$-almost every $\underline{i}$
$[i_1,\ldots,i_m] \in Y_{m,\alpha, 2\epsilon}$ for $m$ big enough.
This implies that there exists $N$ such that for all $m>N$ the
cylinders from $Y_{m,\alpha,2\epsilon}$ have total measure $\mu_n$
greater than 2/3.

Combining those two statements, we see that for all $m$ big enough

\begin{equation} \label{ghghg}
\sum_{[i_1,\ldots,i_m]\in Y_{m,\alpha,2\epsilon}}
(D_m(\underline{i}))^{\dim \nu_n - \epsilon} > 1/3.
\end{equation}
Since the diameters of the cylinders from $Y_{m,\alpha,
2\epsilon}$ are exponentially small \eqref{ghghg} implies
that

\[
s_m \geq \dim \nu_n -\epsilon -O(1/m)
\]
for all sufficiently big $m$. At the same time, by \cite{HR}

\[
\dim \nu_n = \frac {h(\mu_n, \sigma)} {\int \psi d\mu_n}
\]
which, by Corollary \ref{ergodicdensity}, can be chosen
arbitrarily close to $s$.
\end{proof}

\section{Proof of Theorem \ref{main}}
The first statement follows immediately from Corollary
\ref{ergodicdensity}. The second and fourth statements are going
to be obtained in the course of the proof of the main, third
statement. We will first prove the easy upper bound.

It follows immediately from Corollary \ref{ergodicdensity} that
for any $\epsilon>0$
$$\dim\Pi
Y_{\alpha}\leq\sup_{\mathcal{M}_H(\Sigma_A)}\left\{\frac{h(\mu,\sigma)}{\lambda(\mu)}:\alpha-\epsilon
\leq -\frac{\int
\phi\text{d}\mu}{\lambda(\mu)}\leq\alpha+\epsilon\text{ and
}\lambda(\mu)>0\right\}.$$
Note that since $\mathcal{M}_H(\Sigma_A)\subset\mathcal{M}_{\sigma(\Sigma_A)}$ we also have the same inequality if the supremum is taken over all invariant measures,
$$\dim\Pi
Y_{\alpha}\leq\sup_{\mathcal{M}_\sigma(\Sigma_A)}\left\{\frac{h(\mu,\sigma)}{\lambda(\mu)}:\alpha-\epsilon
\leq -\frac{\int
\phi\text{d}\mu}{\lambda(\mu)}\leq\alpha+\epsilon\text{ and
}\lambda(\mu)>0\right\}.$$

  To complete the proof of the upper
bound we need to show the supremum over invariant measures
$$\sup_{\mu\in\mathcal{M}_{\sigma}(\Sigma_A)}\left\{\frac{h(\mu,\sigma)}{\lambda(\mu)}:-\frac{\int
\phi\text{d}\mu}{\lambda(\mu)}=\alpha\text{ and
}\lambda(\mu)>0\right\}$$ varies continuously with $\alpha$.

The supremum is an upper semi-continuous function of $\alpha$
because of the upper semi-continuity of entropy (see Theorem 8.2 in \cite{W}) and the continuity
of the Lyapunov exponent. We now fix $\alpha<\beta\in
[\alpha_{\min},\alpha_{\max}]\backslash\{\infty\}$ and let
$\mu_\alpha,\mu_{\beta}\in\mathcal{M}_{\sigma}(\Sigma_A)$ satisfy
that $-\frac{\int
\phi\text{d}\mu_{\alpha}}{\lambda(\mu_{\alpha})}=\alpha$,
$-\frac{\int
\phi\text{d}\mu_{\beta}}{\lambda(\mu_{\beta})}=\beta$,
$\frac{h(\mu_{\alpha},\sigma)}{\lambda(\mu_{\alpha})}=\dim X_{\alpha}$
and $\frac{h(\mu_{\beta},\sigma)}{\lambda(\mu_{\beta})}=\dim X_{\beta}$
(those measures exist because $\alpha <\beta <\infty$ implies
$\lambda(\mu_\alpha), \lambda(\mu_\beta)\geq c(\beta)>0$). To show
that the function is right lower semi-continuous at $\alpha$ and
left lower semi-continuous at $\beta$ we simply consider convex
combinations of $\mu_\alpha$ and $\mu_\beta$.

We now turn to the lower bound. Initially we will prove the lower
bound in the case where there exist hyperbolic measures with
dimension arbitrarily close to that of $\Lambda$. To complete the
proof we will then need to show that $\dim X_{\infty}=\dim\Lambda$
still holds in the case when such hyperbolic measures are not
known to exist. Note that if $T$ is $C^{1+\theta}$ then such
measures always exist and that it is unknown whether this is the
case if $T$ is merely $C^1$.

We begin with the following lemma:
\begin{lemma}\label{symbolic}
Let $\underline{i}\in\Sigma_A\backslash E$, $x=\Pi\underline{i}$
and $\alpha\in\R\cup\{\infty\}$. If
$$\lim_{n\rightarrow\infty}-\frac{S_n\phi(\underline{i})}{S_n\psi(\underline{i})}=\alpha$$
and
\begin{equation}\label{middle}
\lim_{n\to \infty} \frac{1}{n} \log \left(\frac {Z_n(\underline{i})}
{D_n(\underline{i})}\right)=0
\end{equation}
  then
$$\lim_{r\rightarrow 0}\frac{\log\nu(B(x,r))}{\log r}=\alpha.$$
\end{lemma}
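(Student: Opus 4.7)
The plan is to sandwich $B(x,r)$ between two cylinders of the form $\Delta_k(\underline{i})$ and convert the measure and radius comparisons into the ratio of Birkhoff sums appearing in the hypothesis. For each sufficiently small $r>0$, I would define
\[
n(r)=\min\{k:D_k(\underline{i})\leq r\},\qquad m(r)=\max\{k:Z_k(\underline{i})\geq r\}.
\]
Since $\underline{i}\notin E$ implies $Z_k(\underline{i})>0$ for every $k$ (the sequence $Z_k$ is monotone non-increasing and $\underline{i}\in E$ as soon as it hits zero), and since $D_k,Z_k\to 0$ by the lemma quoted from \cite{U} together with hypothesis \eqref{middle}, both indices are well defined and tend to infinity as $r\to 0$. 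The defining pinches give the inclusions $\Delta_{n(r)}(\underline{i})\subseteq B(x,r)\subseteq\Delta_{m(r)}(\underline{i})$, hence the measure sandwich $\nu(\Delta_{n(r)}(\underline{i}))\leq\nu(B(x,r))\leq\nu(\Delta_{m(r)}(\underline{i}))$.

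The next step is to pass to Birkhoff sums. The weak Gibbs property of $\tilde\nu$ (with $P=0$) gives $\log\nu(\Delta_k(\underline{i}))=S_k\phi(\underline{i})+o(k)$; Lemma \ref{distortion} gives $\log D_k(\underline{i})=-S_k\psi(\underline{i})+o(k)$; and hypothesis \eqref{middle} then upgrades the latter to $\log Z_k(\underline{i})=-S_k\psi(\underline{i})+o(k)$. Because $|\psi|$ is bounded, two consecutive values of $\log D_k$ and of $\log Z_k$ differ by $O(1)+o(k)$, so the pinch relations defining $n(r)$ and $m(r)$ yield
\[
-\log r=S_{n(r)}\psi(\underline{i})+o(n(r))=S_{m(r)}\psi(\underline{i})+o(m(r)).
\]
Dividing the measure sandwich by the negative number $\log r$ (and reversing the inequalities) then gives
\[
\frac{-S_{m(r)}\phi(\underline{i})+o(m(r))}{S_{m(r)}\psi(\underline{i})+o(m(r))}\leq\frac{\log\nu(B(x,r))}{\log r}\leq\frac{-S_{n(r)}\phi(\underline{i})+o(n(r))}{S_{n(r)}\psi(\underline{i})+o(n(r))}.
\]

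The final step is to let $r\to 0$, hence $n(r),m(r)\to\infty$, and apply the hypothesis $-S_k\phi(\underline{i})/S_k\psi(\underline{i})\to\alpha$. Because $\phi<0$ on the compact space $\Sigma_A$, we have $-S_k\phi(\underline{i})\geq ck$ for some $c>0$, so the $o(k)$ error in each numerator is harmless. For finite $\alpha$ the hypothesis forces $S_k\psi(\underline{i})$ to grow linearly as well, and both outer bounds converge to $\alpha$. For $\alpha=\infty$, the hypothesis combined with the uniform upper bound $-S_k\phi(\underline{i})=O(k)$ forces $S_k\psi(\underline{i})=o(k)$, so the numerator is of order $k$ while the denominator is $o(k)$, yielding divergence to $\infty$. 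The main obstacle is precisely this last case: the $o(k)$ error terms could in principle swamp a sublinearly growing $S_k\psi$, and the uniform lower bound $-S_k\phi\geq ck$ is what rescues the argument, forcing the denominator to be of strictly smaller order than the numerator.
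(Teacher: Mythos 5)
Your proof is correct, and it takes a mildly but genuinely different route from the paper's. The paper fixes a single index $n$ through the pinch $D_n(\underline{i})<r\leq D_{n-1}(\underline{i})$ and gets the inner inclusion $\Delta_n(\underline{i})\subset B(x,r)$ for free; for the outer inclusion it passes from $r$ to $r^{1+\epsilon}$ and uses \eqref{middle} to show $B(x,r^{1+\epsilon})\subset\Delta_n(\underline{i})$, then converts both sides into powers $D_{n-1}^{\alpha+\epsilon}$ and $D_n^{\alpha-\epsilon}\leq r^{\alpha-\epsilon}$. You instead introduce two indices $n(r)$ (from $D_k$) and $m(r)$ (from $Z_k$), obtaining the two-sided sandwich $\Delta_{n(r)}(\underline{i})\subseteq B(x,r)\subseteq\Delta_{m(r)}(\underline{i})$ without any rescaling of $r$, and then translate everything into Birkhoff sums with explicit $o(k)$ corrections. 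The payoff of your bookkeeping is most visible at $\alpha=\infty$: the paper's containment $B(x,r^{1+\epsilon})\subset\Delta_n(\underline{i})$ reduces to an inequality of the shape $\epsilon\,S_{n-1}\psi(\underline{i})\geq o(n)$, which is automatic when $S_n\psi$ grows linearly (finite $\alpha$) but not when $S_n\psi=o(n)$, so the paper's brief treatment of the infinite-dimension case implicitly leans on a step that deserves more care. Your $m(r)$ index sidesteps this entirely, since the denominator in the lower estimate is exactly $-\log r>0$, which is $o(m(r))$ while the numerator $-S_{m(r)}\phi$ is bounded below by $c\,m(r)$, giving divergence cleanly. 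The only place where your exposition is slightly loose is the claim that consecutive values of $\log D_k$ and $\log Z_k$ differ by $O(1)+o(k)$: the $o(k)$ contribution is correct because the individual errors are each $o(k)$, but the phrase "two consecutive values differ by $O(1)$'' should not be read as a bound on the increment of the error sequence itself, only on the combined quantity; this does not affect the argument, since all you use is that $-\log r$ agrees with $S_{n(r)}\psi$ and $S_{m(r)}\psi$ up to $o(n(r))$ and $o(m(r))$ respectively.
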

\begin{proof}
Let $\underline{i}\in\Sigma_A\backslash E$ satisfy the hypothesis
in the Lemma with $\alpha$ finite and let $x=\Pi\underline{i}$.
Fix $\epsilon,r>0$ and choose $n$ such that
$$D_n(\underline{i})<r\leq D_{n-1}(\underline{i}).$$
It follows that
$$B(x,r)\supset \Delta_n(\underline{i})$$
and if $r$ is chosen to be sufficiently small then
$$\nu(B(x,r))\geq (D_{n-1}(\underline{i}))^{\alpha+\epsilon}.$$
Furthermore, by property (\ref{middle}) for $r$ small enough
$$B(x,r^{1+\epsilon})\subset \Delta_n(\underline{i})$$
and hence
$$\nu(B(x,r^{1+\epsilon}))\leq (D_n(\underline{i}))^{\alpha-\epsilon}\leq
r^{\alpha-\epsilon}.$$ Since $\epsilon$ can be chosen to be
arbitrarily small, it follows that
$$\lim_{r\rightarrow 0}\frac{\log\nu(B(x,r))}{\log r}=\alpha.$$
For $\alpha=\infty$ we fix $\epsilon,r>0$ and choose $n$ in the
same way. For any $\beta>0$ if $r$ is sufficiently small then we
have
\begin{eqnarray*}
\nu(B(x,r^{1+\epsilon}))&\leq&\tilde{\nu}([i_1,\ldots,i_n])\\
&\leq&(D_n(\underline{i}))^\beta\leq r^{\beta}
\end{eqnarray*}
and the result follows.
\end{proof}

By Lemma \ref{strongavoid}, the assumptions of Lemma
\ref{symbolic} are satisfied for almost all points for any ergodic
measure $\mu$ on $\Sigma_A$, with

\[
\alpha = - \frac {\int \phi d\mu} {\lambda(\mu)}.
\]
Thus, almost all points for measure $\mu \circ \Pi^{-1}$ will
belong to $X_\alpha$, and by \cite{HR} the Hausdorff dimension of
this set is $h(\mu,\sigma)/\lambda(\mu)$. Hence,

\begin{equation} \label{ergodic}
\dim X_\alpha \geq \sup \left\{\frac {h(\mu,\sigma)} {\lambda(\mu)}: \mu\
\text{ergodic}, -\frac {\int \phi d\mu} {\lambda(\mu)}
=\alpha\right\}.
\end{equation}

However, what we need to prove is the lower bound given by
supremum over all invariant measures, not only ergodic measures in
\eqref{ergodic}. The first case is $\alpha\in
\{\alpha_{\text{min}}, \alpha_{\text{max}}\}$, and we will prove
that for such $\alpha$ suprema over ergodic measures and over
invariant measures are equal.

For $\alpha=\alpha_{\text{min}}$ we consider any invariant measure
$\mu$ such that
$$-\frac{\int \phi\text{d}\mu}{\int \psi\text{d}\mu}=\alpha_{\text{min}}.$$
For any measure $\mu_i$ in the ergodic decomposition of $\mu$ it
follows that
$$-\frac{\int \phi\text{d}\mu_i}{\int \psi\text{d}\mu_i}=\alpha_{\text{min}}$$
since otherwise there would exist an ergodic measure contradicting
the definition of $\alpha_{\min}$. Moreover at least one measure
in the ergodic decomposition must satisfy
$\frac{h(\mu_i,\sigma)}{\lambda(\mu_i)}\geq\frac{h(\mu,\sigma)}{\lambda(\mu)}$.
This completes the proof of the lower bound for
$\alpha=\alpha_{\text{min}}$.

As the same proof works for $\alpha=\alpha_{\max}<\infty$, there
are two cases left: $\alpha\in(\alpha_{\min},\alpha_{\max})$ and
$\alpha=\alpha_{\max}=\infty$. This time we will not be able to
find a good measure for the given $\alpha$, but we will find them
for some close $\alpha$. In fact, the measures we are going to
find will be Gibbs measures, hence in particular ergodic. We will
then use them to construct certain big set and then prove that it
is contained in $X_\alpha$. The beginning parts of the proofs
differ in both cases but the subsequent argument is the same.

Let $\mu$ be an invariant measure satisfying $-\int \phi
d\mu/\lambda(\mu)=\alpha\in (\alpha_{\min},\infty)$. By Lemma 4.2
from \cite{O}, there exists a sequence of Gibbs measures $\mu_i$
weakly converging to $\mu$, such that $h(\mu_i,\sigma)\to
h(\mu,\sigma)$. Note these are not just weak Gibbs measures but Gibbs measures in the usual sense and thus ergodic. The weak convergence implies the convergence of
$\int \phi$ and of $\lambda$ which means that
$h(\mu_i,\sigma)/\lambda(\mu_i)\to h(\mu,\sigma)/\lambda(\mu)$ and
so the Hausdorff dimension of the measures $\mu_i \circ \Pi^{-1}$
converges to this limit.

In the case where $\alpha=\infty$ we will let
$$\eta=\sup_{\mu\in\mathcal{M}_{\sigma}(\Sigma_A)}\left\{\frac{h(\mu,\sigma)}{\lambda(\mu)}:\lambda(\mu)>0\right\}.$$
Our initial goal here will be to prove that $\dim X_\infty\geq
\eta$. By lemma 4.2 from \cite{O} it follows that for any
$\epsilon>0$ we can find an ergodic measure $\mu_1$ such that
$\dim\mu_1\geq \eta-\epsilon$. If we take a parabolic invariant
measure $\mu_2$ and consider invariant measures of the form
$q\mu_1+(1-q)\mu_2$ for $0<q<1$ then it is clear that
$$\frac{h(q\mu_1+(1-q)\mu_2,\sigma)}{\lambda(q\mu_1+(1-q)\mu_2)}=\frac{h(\mu_1,\sigma)}{\lambda(\mu_1)}$$
and that $\lim_{q\rightarrow 0}\lambda(q\mu_1+(1-q)\mu_2)=0$. Thus
again applying Lemma 4.2 from \cite{O} to these measures we get a
sequence of Gibbs measures $\mu_j$ such that
$$\lim_{j\rightarrow\infty}\frac{h(\mu_j,\sigma)}{\lambda(\mu_j)}\geq\eta-\epsilon$$
and
$$\lim_{j\rightarrow\infty} -\frac{\int \phi d\mu_j}{\lambda(\mu_j)}=\infty.$$

Hence, in both cases we have a sequence of Gibbs measures $\mu_i$
for which $-\int \phi d\mu_i/\lambda(\mu_i)\to \alpha$ and we want
to prove that
\[
\dim X_\alpha \geq \liminf \frac {h(\mu_i,\sigma)}
{\lambda(\mu_i)}.
\]
The aim will be to construct a (non-invariant) measure
$\tilde{\mu}$ such that
$\lim_{n\rightarrow\infty}\frac{1}{n}S_n\phi(\underline{i})=\lim_{n\rightarrow\infty}
\int \phi d\mu_n$,
$\lim_{n\rightarrow\infty}\frac{1}{n}S_n\psi(\underline{i})=\lim_{n\rightarrow\infty}
\lambda(\mu_n)$ for $\tilde{\mu}$-almost all $\underline{i}$,
$\dim \tilde{\mu}\geq \lim_{n\rightarrow\infty} \dim \mu_n$ and
the statement of Lemma \ref{strongavoid} is satisfied. Here we
assume (by restricting to a subsequence of $\{\mu_n\}$ if
necessary) that the limits of $\int \phi d\mu_n$, $\int \psi
d\mu_n$ exist. The measure $\tilde{\mu}$ will be constructed using
the approach from section 5 of \cite{GR}.

Let $\{m_i\}$ be an increasing sequence of integers, $m_0=0$. We
will define a new measure $\tilde{\mu}$ (w-measure in the
terminology of \cite{GR}) inductively: $\tilde{\mu}(\Lambda)=1$
and
\[
\tilde{\mu}(\Delta_{\underline{i}^{m_i}\underline{j}^{m_{i+1}-m_i}})=c_{i+1}(\underline{i}^{m_i})
\tilde{\mu}(\Delta_{\underline{i}^{m_i}})
\Pi_*\mu_{i+1}(\Delta_{\underline{j}^{m_{i+1}-m_i}})
\]
where $c_{i+1}(\underline{i}^{m_i})$ are normalizing constants. In
other words, this measure is concentrated on points whose
trajectory for some time $m_1$ is distributed according to measure
$\mu_1$, then for time $(m_2-m_1)$ it is distributed according to
$\mu_2$ and so on.

The properties of measures of this type were checked in \cite{GR}.
First, as proved in \cite{GR}, Proposition 9, the Hausdorff
dimension of $\mu$ is not smaller than the lower limit of
Hausdorff dimensions of $\mu_i$ provided that $m_i$ grow quickly
enough. Furthermore, there exists a sequence $\{K_i\}$ with each
$K_i$ depending only on $\mu_i$ such that for each $m\in
(m_i,m_{i+1})$
\begin{equation} \label{eqn:di}
K_{i+1}^{-1} \leq \frac
{\tilde{\mu}(\Delta_{\underline{i}^{m_i}\underline{j}^{m-m_i}})}
{\tilde{\mu}(\Delta_{\underline{i}^{m_i}})
\mu_{i+1}(\Delta_{\underline{j}^{m-m_i}})} \leq K_{i+1}.
\end{equation}

An immediate consequence of \eqref{eqn:di} is that for any bounded
potential (in particular, $\phi$ or $\psi$), if $m_i$ grows
quickly enough then its Cesaro average at a $\tilde{\mu}$ typical
point is equal to $\lim \int \phi d\mu_i$ (see Proposition 9 in
\cite{GR} again). In our setting this means that,
\begin{equation} \label{eqn:xprop}
\frac {S_n \phi(\underline{i})} {S_n \psi(\underline{i})} \to
\alpha
\end{equation}
$\tilde{\mu}$-almost everywhere.

To finish  we need to use this to prove that
$\tilde{\mu}(X_\alpha)=1$ which we will do by showing the
statement of Lemma \ref{strongavoid} is valid for $\tilde{\mu}$
which will allow us to apply Lemma \ref{symbolic}. Indeed, all the
$\mu_i$ satisfy Lemma \ref{strongavoid}. Hence, for every
$\epsilon$ there exists a sequence $\{C_i\}_{i\in\N}$ such that
\[
\mu_i(\{\underline{j}\in\Sigma_A: \exists k \text{ where }
Z_k(\underline{j})<C_i D_k(\underline{i}) e^{-k
\epsilon}\})<K_{i+1}^{-1}\cdot 2^{-i}.
\]
Due to \eqref{eqn:di}, this implies that for every $i$ and
$\underline{i}^{m_i}$, inside every cylinder
$[i_1,\ldots,i_{m_i}]$ the relative $\tilde{\mu}$ measure of
points $\underline{j}$ for which there exists $k\in(m_i, m_{i+1}]$
such that
\begin{equation} \label{eqn:diam}
Z_k(\underline{j})< C_{i+1} D_k(\underline{j}) e^{-k\epsilon}
\end{equation}
is not greater than $2^{-i-1}$. By the Borel-Cantelli Lemma,
\eqref{eqn:diam} is satisfied only finitely many times for
$\tilde{\mu}$-almost any $\underline{j}$.

Note here that $D_k(\underline{j})$ is decreasing exponentially
fast with $k$ for $\mu_i$-almost every $\underline{j}$ (but the
rate depends on $i$). Thus, if we choose $\{m_i\}$ to be
increasing quickly enough, we will get
\[
\liminf_{k\rightarrow\infty} \frac 1 k \log\left( \frac
{Z_k(\underline{j})} {D_k(\underline{j})}\right) \geq-2\epsilon
\]
$\tilde{\mu}$-almost everywhere. As this holds for any
$\epsilon>0$  we know that
$$\lim_{k\rightarrow\infty} \frac 1 k \log \left(\frac {Z_k(\underline{j})}
{D_k(\underline{j})}\right)=0$$ and so we can use Lemma
\ref{symbolic}  to deduce that $\tilde{\mu}(X_{\alpha})=1$. We
have shown that for $\alpha\in (\alpha_{\min},\alpha_{\max})$ we
have that
$$\dim X_{\alpha}\geq\sup_{\mu\in\mathcal{M}_{\sigma}(\Sigma_A)}\left\{\frac{h(\mu,\sigma)}{\lambda(\mu)}:-\frac{\int
\phi\text{d}\mu}{\lambda(\mu)}=\alpha\text{ and
}\lambda(\mu)>0\right\}$$ and if $\alpha_{\max}=\infty$ then
$$\dim X_{\infty}\geq\eta.$$
To complete the proof of the lower bound we now need to address
the possible case, where $\eta<\dim\Lambda$.

We start by looking at the local Lyapunov exponents of points. Let
\[
Z=\left\{\underline{i}:\limsup_{n\rightarrow\infty}\frac{S_n\psi(\underline{i})}{n}=0\right\}
\]
\begin{lemma}\label{nonzerolyap}
We have that
$$\dim(\Lambda \setminus \Pi Z)\leq \eta$$
and thus if $\eta<\dim\Lambda$ then
$$\dim\Pi Z=\dim\Lambda.$$
\end{lemma}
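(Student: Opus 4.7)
The plan is to adapt the covering and measure-construction of Section 3, replacing the constraint on the ratio $-S_n\phi/S_n\psi$ by a direct lower bound on $S_n\psi/n$. Observe first that
\[
\Sigma_A\setminus Z=\bigcup_{k\geq 1} Z^{(1/k)},\qquad Z^{(c)}:=\left\{\underline{i}\in\Sigma_A:\limsup_{n\to\infty}\frac{S_n\psi(\underline{i})}{n}\geq c\right\}.
\]
Since $\Lambda\setminus \Pi Z\subseteq \Pi(\Sigma_A\setminus Z)=\bigcup_{k}\Pi Z^{(1/k)}$ and Hausdorff dimension is countably stable, it suffices to prove $\dim \Pi Z^{(c)}\leq \eta$ for every $c>0$.

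Fix $c>0$ and $\epsilon\in(0,c)$. By tempered distortion (Lemma \ref{distortion}), for $n$ large any cylinder containing a single point with $S_n\psi/n\geq c-\epsilon/2$ has $S_n\psi/n\geq c-\epsilon$ throughout. Set
\[
\mathcal{W}_n=\left\{[i_1,\ldots,i_n]:\,S_n\psi(\underline{j})/n\geq c-\epsilon\text{ for every }\underline{j}\in[i_1,\ldots,i_n]\right\}.
\]
Then for every $N$, the union $\bigcup_{n\geq N}\bigcup_{\mathcal{W}_n}\Delta_n(\underline{i})$ covers $\Pi Z^{(c)}$. Define $t_n$ by $\sum_{\mathcal{W}_n}D_n(\tilde{\underline{i}})^{t_n}=1$ and $t=\limsup_n t_n$. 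Since the diameters on $\mathcal{W}_n$ shrink exponentially (from $S_n\psi/n\geq c-\epsilon>0$ combined with Lemma \ref{distortion}), the argument leading to \eqref{noidea2} gives $\dim \Pi Z^{(c)}\leq t$.

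It remains to show $t\leq \eta$. We mimic Lemma \ref{density} and Corollary \ref{ergodicdensity}: set $q(\underline{i}^n)=D_n(\underline{i}^n)^{t_n}$ on $\mathcal{W}_n$ and $0$ elsewhere, and form $\mu_q\in M_n(\Sigma_A)$. By Lemma \ref{muq}(5) we have $\int S_{n+k}\psi\, d\mu_q/(n+k)\geq c-\epsilon-o(1)$, while a calculation parallel to \eqref{somename2} gives $h(\mu_q,\sigma^{n+k})/\int S_{n+k}\psi\, d\mu_q \to t$. Applying Lemma \ref{approx} produces $\sigma$-ergodic invariant measures $\tilde\mu_n$ with $\lambda(\tilde\mu_n)\geq c-\epsilon-o(1)>0$ and $h(\tilde\mu_n,\sigma)/\lambda(\tilde\mu_n)$ arbitrarily close to $t$; such measures are admissible in the supremum defining $\eta$, hence $t\leq \eta$.

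The main delicate point is keeping the Lyapunov exponent of the constructed invariant measures bounded away from zero as $n\to\infty$; this is why we build the cover from cylinders on which $S_n\psi/n\geq c-\epsilon$ holds uniformly (a stronger condition obtained from the limsup-along-a-subsequence hypothesis via tempered distortion), and why we insist on $\epsilon<c$. Without this the resulting measures could have vanishing Lyapunov exponent and fail to enter the supremum defining $\eta$, and the bound would collapse. The second statement of the lemma is then immediate, since $\Lambda=\Pi Z\cup(\Lambda\setminus\Pi Z)$ and $\dim(\Lambda\setminus \Pi Z)\leq \eta<\dim\Lambda$ forces $\dim \Pi Z=\dim\Lambda$.
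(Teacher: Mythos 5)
Your proof is correct. Your second statement follows exactly as the paper's does. For the first statement, the paper and you both reduce to the covering and measure-construction machinery of Section~3, but the decompositions are genuinely different. The paper notes that $\Lambda \setminus \Pi Z \subseteq \Pi(\Sigma_A\setminus Z)\subseteq \bigcup_{\alpha<\infty}\Pi Y_\alpha$ (any point with $\limsup S_n\psi/n>0$ has a subsequence along which $-S_n\phi/S_n\psi$ is bounded, since $-S_n\phi/n$ is bounded above), applies the already-extracted bound \eqref{ya} to finite intervals $(q_1,q_2)$, and then uses countable stability of Hausdorff dimension over $(q_1,q_2)=(k-1,k+1)$, $k\in\N$. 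You instead decompose $\Sigma_A\setminus Z=\bigcup_k Z^{(1/k)}$ by a lower bound on the limsup Lyapunov average, and rerun the $s_n$-cover and $M_n(\Sigma_A)$-measure construction from scratch with the constraint on $S_n\psi/n$ replacing the constraint on the ratio. Your version is arguably more transparent for this particular lemma because $\eta$ is defined purely in terms of $h/\lambda$ over measures with $\lambda>0$, so decomposing by $\psi$ rather than by the $\phi/\psi$-ratio is the more natural thing to do and avoids invoking $\phi$ at all; the paper's version has the advantage of not requiring any new covering computation, since it reuses \eqref{ya}. One cosmetic point: in the measure-construction step you should say $\limsup_n h(\mu_q,\sigma^{n+k})/\int S_{n+k}\psi\,d\mu_q \geq t$ (along a subsequence where $t_n\to t$), rather than a full limit equal to $t$, exactly as in Lemma~\ref{density} and Corollary~\ref{ergodicdensity}; this does not affect the conclusion.
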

\begin{proof}
The first part of this Lemma can easily be deduced from the
previous section: the argument estimating the dimension of $\Pi
Y_{\alpha}$ also shows that
\begin{equation} \label{ya}
\dim \bigcup_{\alpha\in (q_1,q_2)} \Pi Y_\alpha \leq
\sup_{\mu\in\mathcal{M}_{\sigma}(\Sigma_A)}\left\{\frac{h(\mu,\sigma)}{\lambda(\mu)}:q_1-\epsilon\leq
-\frac{\int \phi\text{d}\mu}{\lambda(\mu)}\leq q_2+\epsilon\text{
and }\lambda(\mu)>0\right\}.
\end{equation}
The second part then follows immediately.
\end{proof}
We now look at points which may have zero local Lyapunov exponent
but not have infinite local dimension. Let
$$L_{\beta}=\Pi\left\{\underline{i}\in Z:Z_n(\underline{i})\leq e^{-n\beta}\text{ infinitely often}\right\}.$$
It is clear that if $\underline{i}\in Z$ but
$\Pi(\underline{i})\notin L_{\beta}$ for any $\beta$ then
$\Pi(\underline{i})\in X_{\infty}$. Thus to complete the proof it
suffices to show that $\dim L_{\beta}$ is bounded away from $\dim
\Lambda$ for all $\beta>0$.
\begin{lemma}
For any $\beta>0$ we have that $\dim L_{\beta}=0$.
\end{lemma}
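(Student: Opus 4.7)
The plan is to cover $L_\beta$ by short intervals clustered near the endpoints of level-$n$ cylinders, exploiting the constraint $\underline{i}\in Z$ to keep the number of relevant cylinders subexponential in $n$. The crucial observation is that $\psi = \log|T'| \geq 0$, so $\underline{i}\in Z$ actually gives $\lim_{n\to\infty} S_n\psi(\underline{i})/n = 0$ (not just $\limsup$). Consequently, for any $\epsilon>0$ and all sufficiently large $n$, the cylinder $\Delta_n(\underline{i})$ lies in the family $\mathcal{C}_n(\epsilon)$ of level-$n$ cylinders containing some point with $S_n\psi\leq n\epsilon$.

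First I would bound $\#\mathcal{C}_n(\epsilon)$. By Lemma \ref{distortion}, every cylinder in $\mathcal{C}_n(\epsilon)$ has diameter at least $e^{-n(\epsilon+\rho_n)}$; since level-$n$ cylinders have pairwise disjoint interiors inside $I=[0,1]$, a volume-packing argument yields $\#\mathcal{C}_n(\epsilon) \leq e^{n(\epsilon+\rho_n)}$. This is really the only nontrivial ingredient.

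Next I would set up the cover. Any $x=\Pi\underline{i}$ with $Z_n(\underline{i})\leq e^{-n\beta}$ lies within $e^{-n\beta}$ of one of the two endpoints of $\Delta_n(\underline{i})$, so the set of such points inside cylinders of $\mathcal{C}_n(\epsilon)$ can be covered by at most $2\#\mathcal{C}_n(\epsilon) \leq 2e^{n(\epsilon+\rho_n)}$ intervals of length $e^{-n\beta}$. For $\underline{i}$ in the defining set of $L_\beta$, the infinitely-often condition, combined with the fact that $S_n\psi(\underline{i})\leq n\epsilon$ holds for all large $n$, ensures that for every $N$ the projection $\Pi\underline{i}$ is captured by this covering at some level $n\geq N$. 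Hence $L_\beta$ is contained in $\bigcup_{n\geq N}$ of those intervals for every sufficiently large $N$.

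Finally, for any $s>\epsilon/\beta$ the $s$-dimensional Hausdorff sum is bounded by
\[
\sum_{n\geq N} 2\,e^{n(\epsilon+\rho_n)}\bigl(e^{-n\beta}\bigr)^{s} \;=\; 2\sum_{n\geq N} e^{n(\epsilon+\rho_n-s\beta)},
\]
which tends to $0$ as $N\to\infty$ because $\rho_n\to 0$ eventually makes the exponent negative. Hence $\dim L_\beta \leq \epsilon/\beta$, and since $\epsilon>0$ was arbitrary, $\dim L_\beta=0$. I do not foresee a genuine obstacle; the only thing requiring mild care is arranging that the conditions $S_n\psi\leq n\epsilon$ and $Z_n\leq e^{-n\beta}$ hold at the \emph{same} index $n$, which is handled automatically by the observation that $\underline{i}\in Z$ forces the first condition at all large $n$.
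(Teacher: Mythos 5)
Your proof is correct and follows essentially the same strategy as the paper: both cover $L_\beta$ by short intervals of length $e^{-n\beta}$ near the endpoints of the level-$n$ cylinders that admit a point with $S_n\psi \leq \delta n$, both bound the number of such cylinders using the tempered distortion lemma and the fact that they have pairwise disjoint interiors in $[0,1]$, and both conclude $\dim L_\beta \leq O(\delta/\beta)$ with $\delta$ arbitrary. The only cosmetic difference is your remark that $\psi\geq 0$ upgrades $\limsup$ to $\lim$; this is true but not actually needed, since $\limsup_{n} S_n\psi/n = 0$ already forces $S_n\psi \leq \delta n$ for all large $n$ regardless of sign.
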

\begin{proof}
Fix $\delta>0$ and let $N_0$ be large enough such that $\rho_n <
\delta$ for $n\geq N_0$. Given $n\geq N_0$ let
$$C_n=\{[i_1,\ldots,i_n]:\exists\underline{i}\in [i_1,\ldots,i_n]\text{ where }S_n\psi(\underline{i})\leq \delta n\}.$$
It follows that the diameter of the projection of each cylinder in
$C_n$ is at least $e^{-2\delta n}$ and so there can be at most
$e^{2\delta n}$ cylinders in the set $C_n$. Furthermore we have
that
$$Z\subset \bigcap_{N>N_0} \bigcup_{n\geq N}\bigcup_{I\in C_n}\Pi I.$$

Given $I\in C_n$, $\spa\, \Pi I=[x,y]$ let $I^-=[x,x+e^{-\beta
n}]$ and $I^+=[y-e^{-\beta n},y]$. We have
\[
L_\beta\subset \bigcap_{N>N_0}\bigcup_{n\geq N}\bigcup_{I\in C_n}
(I^-\cup I^+).
\]
The union $\bigcup_{I\in C_n} (I^-\cup I^+)$ has at most
$2e^{2\delta n}$ intervals of length $e^{-\beta n}$ each. Hence,

\[
H^{\frac{2\delta} {\beta}+ \epsilon}_{e^{-\beta n}}
(L_{\beta})\leq 2\sum_{n\geq N}\sum_{I\in C_n}
e^{-n\left(\frac{2\delta\beta}{\beta}\right)}e^{-n\beta\epsilon}\leq
2\sum_{n\geq N}e^{-n\beta\epsilon},
\]
where $H^s_\epsilon$ denotes the approximation of $s$-dimensional
Hausdorff measure using $\epsilon$-covers, and thus $\dim
L_{\beta}\leq \frac{2\delta}{\beta}$. Since $\delta$ was chosen
arbitrarily the proof is complete.
\end{proof}
It immediately follows that $\dim X_{\infty}=\dim\Lambda$.

To complete the proof of part 4 of the theorem we need to show that $\dim X_\alpha$ is
nondecreasing in the presence of parabolic points. Let $\nu_0$ be
a Dirac measure on a parabolic orbit. Then for any invariant
measure $\mu$ measure $\nu_t=t\mu + (1-t) \nu_0$ is also invariant
and satisfies
\[
\frac {h(nu_t,\sigma)} {\lambda(\nu_t)} = \frac {h(\mu,\sigma)}
{\lambda(\mu)}.
\]
Furthermore
\[
-\frac {\int \phi d\nu_t} {\int \psi d\nu_t} = - \frac {\int \phi
d\mu} {\int \psi d\mu} - \frac {1-t} t \frac {\int \phi d\nu_0}
{\int \psi d\mu}
\]
and the right hand side can take any value between $- \frac {\int
\phi d\mu} {\int \psi d\mu}$ and $\infty$. The statement follows.

\section{Proof of Theorem \ref{main3}}
We only need to prove that

\[
\dim X_\alpha \leq
\sup_{\mu\in\mathcal{M}_H(\Sigma_A)}\left\{\frac{h(\mu,\sigma)}{\lambda(\mu)}:-\frac{\int
\phi\text{d}\mu}{\lambda(\mu)}=\alpha\right\}
\]
since the opposite inequality follows from \eqref{ergodic}. For
any $\alpha\in (\alpha_{\min},\infty)$ we choose
$\epsilon<(\alpha-\alpha_{\min})/3$. By Lemma \ref{density},
Theorem \ref{main} and Corollary \ref{sn} we know that for $n$
sufficiently large there exists  measures
$\mu_{q_0,n},\mu_{q_1,n}\in M_n(\Sigma_A)$ with $q_0, q_1$ of
disjoint supports such that

\[
-\frac{\int
S_{n+k}\phi\text{d}\mu_{q_1,n}}{\int S_{n+k}\psi\text{d}\mu_{q_1,n}}\in(\alpha,\alpha+2\epsilon)
\]
\[
\frac{h(\mu_{q_1,n},\sigma^{n+k})}{\int S_{n+k}\psi\text{d}\mu_{q_1,n}}\geq\dim
X_{\alpha+\epsilon}-\epsilon
\]
and
\[
-\frac{\int
S_{n+k}\phi\text{d}\mu_{q_0,n}}{\int S_{n+k}\psi\text{d}\mu_{q_0,n}}\in(\alpha-2\epsilon,\alpha)
\]
\[
\frac{h(\mu_{q_0,n},\sigma^{n+k})}{\int S_{n+k}\psi\text{d}\mu_{q_0,n}}
\geq\dim
X_{\alpha-\epsilon}-\epsilon.
\]

For $0\leq t\leq 1$ we can define $q_t=tq_1+(1-t)q_0$ and consider
the measures $\mu_{q_t,n}\in M_n(\Sigma_A)$. From the properties of
$M_n(\Sigma_A)$ (Lemma \ref{muq} statement 4) it is clear that
$$-\frac{\int
S_{n+k}\phi\text{d}\mu_{q_t,n}}{\int S_{n+k}\psi\text{d}\mu_{q_t,n}}$$
will vary continuously with $t$. Thus there will be a $t_0$ such that
$$-\frac{\int
S_{n+k}\phi\text{d}\mu_{q_{t_0,n}}}{\int S_{n+k}\psi\text{d}\mu_{q_{t_0,n}}}=\alpha.$$

As $q_0$ and $q_1$ have disjoint supports it follows from the second part of Lemma \ref{muq} that
\begin{eqnarray*}
h(\mu_{q_t,n},\sigma^{n+k}) &=& t h(\mu_{q_1,n},\sigma^{n+k}) + (1-t) h(\mu_{q_0,n},\sigma^{n+k}) - t\log t - (1-t)\log (1-t)\\\
  &\geq& t h(\mu_{q_1,n},\sigma^{n+k}) + (1-t)
h(\mu_{q_0,n},\sigma^{n+k}).
\end{eqnarray*}
At the same time,the fifth part of Lemma \ref{muq} implies that
\begin{eqnarray*}
&&\left| \int \frac 1 {n+k} S_{n+k} \psi d\mu_{q_t,n} - t\int \frac 1
{n+k} S_{n+k} \psi d\mu_{q_1,n} -(1-t)\int \frac 1 {n+k} S_{n+k}
\psi d\mu_{q_0,n} \right|\\
&& < \frac {2kL} {n+k} + 2\rho_n.
\end{eqnarray*}
Hence 
\[
\limsup_{n\to\infty} \left( \inf_{t\in
(0,1)}\frac{h(\mu_{q_t,n},\sigma^{n+k})}{\int
S_{n+k}\psi\text{d}\mu_{q_t,n}} - \min \left(
\frac{h(\mu_{q_1},\sigma^{n+k})}{\int
S_{n+k}\psi\text{d}\mu_{q_1,n}} ,
\frac{h(\mu_{q_0,n},\sigma^{n+k})}{\int
S_{n+k}\psi\text{d}\mu_{q_0,n}} \right) \right) \geq 0.
\]
The result now follows by using the continuity of $\dim
X_{\alpha}$ and applying Lemma \ref{approx}, like in Corollary
\ref{ergodicdensity}.

\section{Proof of Theorem \ref{main2}}

Points 1) and 2) of Theorem \ref{main2} are an immediate
consequence of the following lemmas.
\begin{lemma}
Let $\mu_\alpha$ be an equilibrium state for the potential $\psi_a$
and let
\[
\alpha = - \frac {\int \phi d\mu_\alpha} {\lambda(\mu_\alpha)}.
\]
We then have that
\[
\frac {h(\mu_\alpha)} {\lambda(\mu_\alpha)} = \sup_{\mu\in\mathcal{M}_{\sigma}(\Sigma_A)} \left\{\frac
{h(\mu,\sigma)} {\lambda(\mu)}: -\frac {\int \phi d\mu} {\lambda(\mu)}
=\alpha\right\} = b(a) \alpha - a.
\]
\end{lemma}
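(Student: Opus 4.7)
The plan is to derive both equalities by a direct application of the variational principle, using only the defining property $P(\psi_a)=0$ of $b(a)$ together with the equilibrium condition for $\mu_\alpha$. The statement is essentially a Legendre-type bookkeeping identity, so none of the multifractal or distortion machinery from the earlier sections is needed.

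First I would establish the rightmost equality $h(\mu_\alpha,\sigma)/\lambda(\mu_\alpha) = b(a)\alpha - a$. Since $\mu_\alpha$ is an equilibrium state for $\psi_a = a\psi + b(a)\phi$ and $P(\psi_a) = 0$, the equilibrium condition reads
\[
h(\mu_\alpha,\sigma) + a\lambda(\mu_\alpha) + b(a)\int \phi\, d\mu_\alpha = 0.
\]
Substituting the hypothesis $\int \phi\, d\mu_\alpha = -\alpha\lambda(\mu_\alpha)$ and dividing by $\lambda(\mu_\alpha)$ yields the desired formula. The division is legal because $\lambda(\mu_\alpha)>0$: otherwise the ratio defining $\alpha$ would be undefined, contradicting the assumption that $\alpha$ is a real number.

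Next, for the supremum equality, I would take an arbitrary $\mu\in\mathcal{M}_\sigma(\Sigma_A)$ with $\lambda(\mu)>0$ and $-\int\phi\,d\mu/\lambda(\mu)=\alpha$, and apply the inequality side of the variational principle, $h(\mu,\sigma)+\int\psi_a\,d\mu\leq P(\psi_a)=0$. Expanding $\psi_a$ and using $\int\phi\,d\mu=-\alpha\lambda(\mu)$ exactly as above produces
\[
h(\mu,\sigma) \leq (b(a)\alpha - a)\lambda(\mu),
\]
so that $h(\mu,\sigma)/\lambda(\mu)\leq b(a)\alpha-a$. Since $\mu_\alpha$ itself lies in the admissible class and attains this upper bound by the previous paragraph, the supremum equals the common value $b(a)\alpha-a$.

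I do not anticipate any real obstacle; the argument is a short and direct consequence of the variational principle. The only point requiring a brief sanity check is the positivity $\lambda(\mu_\alpha)>0$, which is automatic whenever $\alpha$ is finite.
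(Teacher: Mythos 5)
Your proof is correct and follows essentially the same route as the paper: apply the variational principle to $\psi_a$, use the equilibrium condition for $\mu_\alpha$ to get equality, use the general inequality for arbitrary $\mu$ in the admissible class to get the bound, then divide by the (positive) Lyapunov exponent and substitute the constraint $\int\phi\,d\mu/\lambda(\mu)=-\alpha$. The paper phrases it as a single chain of (in)equalities rather than splitting the two equalities, but the content is identical.
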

\begin{proof}
We are going to compare $h/\lambda$ for $\mu_\alpha$ and for any
other invariant measure $\mu$ with $-\int \phi
d\mu/\lambda(\mu)=\alpha$. As $\alpha$ is finite, $\lambda(\mu)$
must be positive. By the variational principle we have that
\[
0=P(\psi_a) =h(\mu_\alpha,\sigma)+\int \psi_a d\mu_\alpha \geq h(\mu,\sigma) +
\int \psi_a d\mu.
\]
Dividing by $\lambda(\mu_\alpha)$ or by $\lambda(\mu)$, we get
\[
0=\frac {h(\mu_\alpha,\sigma)} {\lambda(\mu_\alpha)} + a -b(a)\alpha
\]
and
\[
0\geq \frac {h(\mu,\sigma)} {\lambda(\mu)} + a -b(a)\alpha
\]
from which the assertion follows.
\end{proof}
\begin{lemma}
For any $\alpha\in(\alpha_{\min},\infty)$ there exists some
measure $\mu_\alpha$, which is an equilibrium state for $\psi_a$
for some $a\in \R$, such that
\[
-\frac {\int \phi d\mu_\alpha} {\lambda(\mu_\alpha)} = \alpha.
\]
\end{lemma}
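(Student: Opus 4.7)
The plan is to use Theorem \ref{main} to produce an invariant measure $\mu_\alpha$ attaining $\dim X_\alpha$ with the correct ratio, and then identify an $a^*\in\R$ for which this measure automatically equilibrates $\psi_{a^*}$. First I would extract such a $\mu_\alpha$: for $\alpha\in(\alpha_{\min},\infty)$, part 3 of Theorem \ref{main} together with upper semi-continuity of $h(\cdot,\sigma)$, weak-$*$ continuity of $\mu\mapsto\int\phi\, d\mu$ and of $\lambda(\mu)$, and the fact that the constraint $-\int\phi\, d\mu/\lambda(\mu)=\alpha<\infty$ combined with boundedness of $|\phi|$ forces $\lambda(\mu)\geq c(\alpha)>0$, yield $\mu_\alpha\in\mathcal{M}_\sigma(\Sigma_A)$ with $h(\mu_\alpha,\sigma)/\lambda(\mu_\alpha)=\dim X_\alpha$ and $-\int\phi\, d\mu_\alpha/\lambda(\mu_\alpha)=\alpha$.

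Next I would exploit the convexity of $b$. Since the pressure functional is convex on potentials and $\int\phi\, d\mu$ stays uniformly bounded away from zero, the relation $P(a\psi+b(a)\phi)=0$ forces $a\mapsto b(a)$ to be convex; hence $F(a):=b(a)\alpha-a$ is convex, and by part 1 of Theorem \ref{main2} (already proved above) we have $\inf_a F(a)=\dim X_\alpha$. I would then prove that the infimum is attained at some $a^*\in\R$ by estimating asymptotic slopes. At any point where $b$ is differentiable with equilibrium state $\mu_a$, implicit differentiation of $P(\psi_a)=0$ gives
\[
b'(a)=\frac{\lambda(\mu_a)}{-\int\phi\, d\mu_a},
\]
the reciprocal of the $\mu_a$-ratio. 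Taking $a\to+\infty$ resp.\ $a\to-\infty$ and extracting weak-$*$ limits, any accumulation point of $(\mu_a)$ must realize the ratio $\alpha_{\min}$ resp.\ $\alpha_{\max}$ by the variational definitions of those quantities. Thus $F'(+\infty)=\alpha/\alpha_{\min}-1>0$ and $F'(-\infty)=\alpha/\alpha_{\max}-1<0$ (with the convention $1/\infty:=0$), so by convexity $F$ attains its minimum at some $a^*\in\R$.

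Finally, the chain
\[
\frac{h(\mu_\alpha,\sigma)}{\lambda(\mu_\alpha)}=\dim X_\alpha=F(a^*)=b(a^*)\alpha-a^*,
\]
combined with $\alpha=-\int\phi\, d\mu_\alpha/\lambda(\mu_\alpha)$, rearranges to
\[
h(\mu_\alpha,\sigma)+a^*\lambda(\mu_\alpha)+b(a^*)\int\phi\, d\mu_\alpha=0=P(\psi_{a^*}),
\]
so $\mu_\alpha$ saturates the variational principle for $\psi_{a^*}$ and is therefore an equilibrium state for $\psi_{a^*}$; this proves the lemma with $a=a^*$ and $\mu_a=\mu_\alpha$.

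The hard part will be the asymptotic-slope analysis in the middle step: it requires controlling the equilibrium-state sequence $(\mu_a)$ as $a\to\pm\infty$ (where parabolic points complicate the picture), passing to weak-$*$ limits, and matching the limiting ratios against $\alpha_{\min}$, $\alpha_{\max}$ using their variational characterizations together with upper semi-continuity of entropy; one must also treat non-differentiability points of $b$ by replacing $b'(a)$ with the one-sided derivatives (subgradients) and checking that the relevant equilibrium states still exist in a compactness argument along differentiability points approaching $a^*$.
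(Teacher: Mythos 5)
Your approach is genuinely different from the paper's, but as written it contains a circularity. You invoke ``part 1 of Theorem \ref{main2} (already proved above)'' to get $\inf_a F(a)=\dim X_\alpha$, but in the paper part~1 is derived only \emph{after} the present lemma: the first lemma of the section gives $\dim X_\alpha = b(a)\alpha-a$ only for the specific value of $a$ attached to an existing equilibrium state, and the present lemma is precisely what supplies that equilibrium state. Without it, the variational principle only yields the one-sided bound $\dim X_\alpha \leq \inf_a F(a)$. Consequently your final chain $\frac{h(\mu_\alpha,\sigma)}{\lambda(\mu_\alpha)}=\dim X_\alpha = F(a^*)$ is not justified: you would only know $\frac{h(\mu_\alpha,\sigma)}{\lambda(\mu_\alpha)}\leq F(a^*)$, which is not enough to saturate the variational principle at $\psi_{a^*}$.

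The circularity can in principle be avoided by working with the subgradient condition at the minimum rather than with the numerical value $F(a^*)$: since $F$ is convex and minimized at $a^*$, one has $1/\alpha\in\partial b(a^*)$, and $\partial b(a^*)$ is generated by the ratios $\lambda(\mu)/(-\int\phi\,d\mu)$ over equilibrium states $\mu$ of $\psi_{a^*}$; taking an appropriate (convex-combination) equilibrium state then gives exactly the required ratio $\alpha$. But this identification of $\partial b(a^*)$ with the set of equilibrium-state ratios is the crux, it is precisely what you flag as ``the hard part'', and it is not routine at phase-transition points; it would require an argument comparable in effort to the paper's. The paper instead proceeds bottom-up: it first describes $b(a)$ qualitatively, then uses the variational inequality $\alpha_a\leq \alpha+\tfrac{1}{b(a)}$ to show that the equilibrium-state ratios $\alpha_a$ approach $\alpha_{\min}$ as $b(a)\to\infty$ (resp.\ approach $\alpha_{\max}$, including the $\alpha_{\max}=\infty$ case via a parabolic Dirac measure), and finishes with a continuity argument along the one-parameter family. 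That route is more explicit and avoids having to control $\partial b$ at non-differentiability points.
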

\begin{proof}
As $\int \phi d\mu/\lambda(\mu)$ is a continuous function of $\mu$
(in the weak* topology), we only need to check that there exist
such measures $\mu_\alpha$ for $\alpha$ arbitrarily close to
$\alpha_{\min}$ or $\alpha_{\max}$.

Let us first describe $b(a)$. We will let
$$s=\sup\{t:P(t\psi)=0\}.$$
If $T$ has parabolic points, $P(a\psi)\geq 0$ for any $a\in \R$
and so $b(a)$ is never negative. For $a\leq s$ we have that
$b(a)=0$, otherwise it is strictly positive. On the other hand,
$b(a)$ can be arbitrarily big since
\[
b(a)\geq \frac {P(a\psi)} {-\inf_{\underline{i}}
\phi(\underline{i})}
\]
and so $b(a)\rightarrow\infty$ as $a\rightarrow\infty.$ If $T$
does not have any parabolic points, $b(a)$ ranges from $-\infty$
to $\infty$.

Let $\mu_a$ be an equilibrium state for $\psi_a$ and $\mu$ be any
invariant measure. Denote
\[
\alpha = -\frac {\int \phi d\mu} {\lambda(\mu)}
\]
and
\[
\alpha_a = -\frac {\int \phi d\mu_a} {\lambda(\mu_a)}.
\]
By the variational principle we have
\[
0 = h(\mu_a,\sigma) + \int \psi_a d\mu_a \geq h(\mu,\sigma) + \int \psi_a d\mu.
\]
Dividing by $\lambda(\mu_a)$ or by $\lambda(\mu)$ and subtracting,
we get
\[
0\geq b(a)(\alpha_a-\alpha) + \frac {h(\mu,\sigma)} {\lambda(\mu)} -
\frac {h(\mu_a,\sigma)} {\lambda(\mu_a)}.
\]
The last summand on the right hand side is bounded by 1, the
second summand is positive. Hence,
\[
\alpha_a \leq \alpha +\frac 1 {b(a)}
\]
As $\alpha$ can be chosen to be arbitrarily close to
$\alpha_{\min}$ (by the proper choice of $\mu$) and $b(a)$ can be
chosen to be arbitrarily big (by the proper choice of $a$), this
means that the corresponding $\alpha_a$ can also be chosen
arbitrarily close to $\alpha_{\min}$. Similar argument works for
$\alpha_{\max}$ if $\alpha_{\max}<\infty$.

If $\alpha_{\max}=\infty$, for $a$ small enough $b(a)=0$ and the
Dirac measure at a parabolic orbit is the equilibrium state for
$\psi_a$. Hence, among the equilibrium states for the family
$\{\psi_a\}$ there will be some arbitrarily close (in the weak*
topology) to this Dirac measure. For them $\int \psi$ is
arbitrarily small while $-\int \phi$ is bounded away from zero.
Hence, the ratio $-\int \phi/\int \psi$ is arbitrarily big.
\end{proof}

We proved point 1) of Theorem \ref{main2}, stating that $\dim
X_\alpha$ is a Legendre-Fenchel transformation of the (sometimes multivalued) function
$b^{-1}(a)$. The concavity of $\dim X_\alpha$ follows by standard
properties of the Legendre-Fenchel transformation.

\section{Proof of Theorem \ref{thm:anal}}

In this section we assume the existence of parabolic periodic orbits. To
prove Theorem \ref{thm:anal} we will construct an induced system
(a hyperbolic expanding map with infinitely many inverse branches)
and apply results known for such systems. We construct the induced
countable state system, $\bar{T}:\bar{X}\rightarrow\bar{X}$ for
our parabolic system as in \cite{HMU}. We define the functions
$\bar{\phi},\bar{\psi}$ as the induced potentials relating to
$\psi$ and $\phi$. For $\alpha\in [\alpha_{\min},\infty)$ we
define the set
\[
\bar{X}_{\alpha}=\left\{x\in\bar{X}:
\lim_{n\rightarrow\infty}\frac {\sum_{i=0}^{n-1}\bar{\phi}(T^ix)}
{\sum_{i=0}^{n-1}\bar{\psi}(T^ix)}=\alpha\right\}.
\]

It is immediately clear that
$$X_{\alpha}\subset\bar{X}_{\alpha}\subset \bigcup_{\beta \leq \alpha} \Pi Y_{\beta}.$$
By \eqref{ya} and monotonicity of $\dim X_\alpha$, it immediately
follows that $\dim X_{\alpha}=\dim\bar{X}_{\alpha}$. In the case
where there exists no SRB measure it is shown in Theorem 7.4 of
\cite{HMU} that $\dim \bar{X}_{\alpha}$ varies analytically with
$\alpha$. In the case where the SRB measure $\mu$ exists, Theorem
7.4 of \cite{HMU} shows that $\dim \bar{X}_{\alpha}$ varies
analytically with $\alpha$ when
$\alpha<-\frac{\int\phi\text{d}\mu}{\int\psi\text{d}\mu}.$ For
$\alpha$ above this value the function $\alpha\rightarrow\dim
X_{\alpha}$ is constant and equal to $\dim \Lambda$.

\end{document}